\documentclass[11pt]{amsart}
\usepackage{amsmath}
\usepackage{amsfonts}
\usepackage{amssymb}
\usepackage{graphicx}

\newcommand\R{{\mathbb{R}}}

\setlength{\hoffset}{-.39cm}
\setlength{\voffset}{0.2in}
\topmargin -.35in
\oddsidemargin  0.2in
\evensidemargin 0.2in
\textheight 8.8in
\textwidth 6.41in
\setlength{\leftmargin}{2.0cm}
\clubpenalty=10000
\widowpenalty=10000

\parindent = 0 pt
\parskip = 12 pt

\theoremstyle{plain}
  \newtheorem{theorem}[subsection]{Theorem}
  
  \newtheorem{proposition}[subsection]{Proposition}
  \newtheorem{lemma}[subsection]{Lemma}

\newtheorem{mainthm}{Theorem}

\theoremstyle{remark}

\theoremstyle{definition}

\begin{document}
\title[]{On smoothness of extremizers of the Tomas-Stein inequality for $S^1$}
\author{Shuanglin Shao and Ming Wang}
\address{Department of Mathematics, KU, Lawrence, KS 66045}
\email{slshao@ku.edu, mwang@ku.edu}

\vspace{-1in}
\begin{abstract}
We prove that the extremizers to the Tomas-Stein inequality for the one dimension sphere are smooth. This is achieved by studying the associated generalized Euler-Lagrange equation. The goal is to establish real or complex analyticity of extremizers. 
\end{abstract}
\subjclass[2010]{42A38, 45E10}
\keywords{The Tomas-Stein inequality, extremizers, smoothness}
\maketitle

\section{Introduction}
To understand the Fourier transform of functions on the Euclidean space, Stein \cite{Stein:1993} proposed the restriction problem. Let $d\ge 1$ be a fixed integer. Let $S$ be a smooth compact hypersurface with boundary in the space $\mathbb{R}^{d+1} =\mathbb{R}\times \mathbb{R}^d$ and $\sigma$ be the induced Lebesgue measure on $S$. Stein's restriction problem asks for which $1\le p,q\le \infty$ is the following estimate true,
$$ \|\widehat{f\sigma}\|_{L^q(\mathbb{R}\times \mathbb{R}^d)} \le C_{p,q,d,S} \|f\|_{L^p(S)},$$
for all test functions, where $\widehat{F}$ is the space time Fourier transform. It is not hard to see that $p,q$ satisfy the following necessary conditions
$$ q>\frac {2(d+1)}{d},\,\frac {d}{p'} \ge \frac {d+2}{q},$$
where $p'$ is the conjugate exponent of $p$. This problem is related to several outstanding conjectures in harmonic analysis such as the Bochner-Riesz conjecture and the Kakeya conjecture; for the references, see for instance \cite{Stein:1993, Bourgain:1991:Besicovitch-maximal, Bourgain-Guth:2011:bounds-oscillatory-integrals, Guth:2016:restriction-polynomial-partition, Guth:2016:restriction-polynomial-partition-II, Tao:2003:paraboloid-restri, Wolff:2001:restric-cone}. 

Let $S=S^d$, the unit sphere in $\mathbb{R} \times \mathbb{R}^d$, and $\sigma$ be the surface measure. The Tomas-Stein inequality for the sphere is
\begin{equation}\label{TS}
\|\widehat{f\sigma}\|_{L^{2+\frac 4d}(\R^{d+1})}\le \mathcal{R}_d \|f\|_{L^2(S^d)}
\end{equation}
where $d\ge 1$, $\mathcal{R}_d$ denotes the optimal constant
\begin{equation}\label{eq-1}
\mathcal{R}_d = \sup_{\substack{f\in L^2\\ f\neq 0 }}\frac {\|\widehat{f\sigma}\|_{L^{2+\frac 4d}} }{ \|f\|_{L^2(S^d)}}.
\end{equation}
The Tomas-Stein inequality \label{TS} belongs to the family of Fourier restriction inequalities. It can be regarded as an estimate of the Fourier transform of a measure supported on the sphere $S^d$. The non-endpoint estimate was established by Tomas while the endpoint was established by Stein by complex interpolation \cite{Stein:1993}. Its variants, the Strichartz inequalities, are useful in the partial differential equations, see for instance  \cite{Tao:2006-CBMS-book}. 

Recently the study of the extremal problem for the Fourier restriction inequality or the Strichartz inequalities has attracted a lot of attention. It includes the questions of proving existence of extremizers and establishing characterization of extremizers such as regularity or uniqueness for these inequalities. 

A variant of \eqref{TS} is the Strichartz inequality for the Schr\"odinger equation, 
\begin{equation}\label{eq-Strichartz}
\|e^{it\Delta} f \|_{L^{2+\frac 4d}_{t,x}(\mathbb{R}\times \mathbb{R}^d)} \le C \|f\|_{L^2(\mathbb{R}^d)}
\end{equation}
where $e^{it\Delta} f (x) = \frac {1}{(2\pi)^{d/2}}\int_{\mathbb{R}^d} e^{ix\cdot \xi +it |\xi|^2} \hat{f}(\xi) d\xi$ and $d\ge 1$. This can be viewed as an estimate of the Fourier transform of a measure supported on the paraboloid in $ \mathbb{R}\times \mathbb{R}^d$. For $d=1$ in \eqref{eq-Strichartz}, Kunze is the first to prove the existence of extremizers in \cite{Kunze:2003:maxi-strichartz-1d} by an elaborate concentration-compactness argument. Foschi \cite{Foschi:2007:maxi-strichartz-2d}, Hundertmark and Zharnitsky \cite{Hundertmark-Zharnitsky:2006:maximizers-Strichartz-low-dimensions} show that Gaussian functions are the only extremizers for \eqref{eq-Strichartz} when $d=1,2$. Bennett, Bez, Carbery and Hundertmark \cite{Bennett-Bez-Carbery-Hundertmark:2008:heat-flow-of-strichartz-norm} give a different proof of this fact by using the heat-flow method. In \cite{Carneiro-2009-sharp-strichartz}, Carneiro establishes some sharp Strichartz inequalities for the Schr\"odinger equation. When $d\ge 3$, we \cite{Shao:2009:maximizers-schrodinger} have proved the existence of extremizers by using the profile decompositions developed in \cite{Begout-Vargas:2007:profile-schrod-higher-d}. For the wave equation, Bulut \cite{Bulut:2009:maximizer-wave} has proved the existence of extremizers by using the profile decompositions in the spirit of \cite{Bahouri-Gerard:1999:profile-wave}.

An extremizer $f$ to the Tomas-Stein inequality \eqref{TS} is a nonzero function $f\in L^2$ such that 
$ \|\widehat{f\sigma}\|_{L^{2+\frac 4d}(\R^{d+1})}= \mathcal{R}_d \|f\|_{L^2(S^d)} .$ In this note, we specify the dimension $d=1$ and write $\mathcal{R} = \mathcal{R}_1$. In \cite{Shao:2015:extremals-1d-sphere}, we have proved there exists an extremizer when $d=1$. Here we establish the smoothness property of extremizers. The work  \cite{Shao:2015:extremals-1d-sphere} and this note follow roughly similar lines as in \cite{Christ-Shao:extremal-for-sphere-restriction-I-existence, Christ-Shao:extremal-for-sphere-restriction-II-characterizations}. In the previous work \cite{Christ-Shao:extremal-for-sphere-restriction-I-existence, Christ-Shao:extremal-for-sphere-restriction-II-characterizations},  Christ and the author prove the existence of extremizers and established some characterization for the Tomas-Stein inequality \eqref{TS} when $d=2$. In this case, Foschi \cite{Foschi:2013:global-extremisers-2d} settles down the problem by proving that constants are the only extremizers up to the complex modulation. In \cite{Carneiro-Foschi-Silva-Thiele:2015:sharp-trilinear-inequality-S1}, for \eqref{TS} when $d=1$, Carneiro, Foschi, Silva and Thiele recently prove a conditional result that constants are the only extremizers up to the complex modulation. This relies on the earlier work of Silva and Thiele \cite{Silva-Thiele:2015:integrals-6-Bessel-functions} about the inequality of a 6-fold product of Bessel functions and the study of a functional equation of Cauchy-Pexider type on the sphere in Charalambides \cite{Charalambides:2012:Cauchy-Pexider}. Frank, Lieb and Sabin \cite{Frank-Lieb-:Sabin:2016:extremisers-all-dimensions} prove that extremizers always exist for the Tomas-Stein inequality \eqref{TS} in all dimensions provided that a well-known conjecture the Strichartz inequalities for the Schr\"odinger equations is true.  They use the method of the missing mass.  Very recently, We have established a similar result by using profile decompositions and Tao's bilinear restriction estimates for paraboloids in \cite{Tao:2003:paraboloid-restri}.  

The work \cite{Christ-Shao:extremal-for-sphere-restriction-I-existence, Christ-Shao:extremal-for-sphere-restriction-II-characterizations, Shao:2015:extremals-1d-sphere} are partly motivated by the recent progress of application of the concentration compactness method or the profile decompositions in critical dispersive partial differential equations, see for instance Bourgain \cite{Bourgain:1999:radial-NLS}, Colliander, Keel, Staffilani, Takaoka and Tao \cite{I-teem:2008:GWP-for-energy-critical-NLS-in-R3}, Kenig and Merle \cite{Kenig-Merle:2006:focusing-energy-NLS-radial, Kenig-Merle:2008:focusing-energy-nonlinear-Wave} for radial or general data. In Lemma \ref{le-4} below establishing smoothness of extremizers, the analysis resembles some feature in the works of critical equations. We need to show that the critical points break the $L^2$ approximate scaling and hence gain certain regularity. 

In this note, we chacterize the extremizers in hope of finding the exact forms. We will prove that solutions to the following generalized Euler-Lagrange equation, which the extremizers satisfy, are smooth. The equation to the inequality \eqref{TS} is that, for $f\in L^2(S^1)$, 
\begin{equation}\label{Euler-Lagrange}
f\sigma \ast f\sigma \ast f\sigma \ast \tilde{f} \sigma \ast \tilde{f}\sigma (x) = \lambda f(x), \text{ for almost everywhere } x\in S^1,
\end{equation}
where $\lambda=\mathcal{R}^6 \|f\|^4_{L^2}$ and $\tilde{f} (x) =\bar{f}(-x)$, $\bar{f}$ denotes the complex conjugate of $f$. The main result is the following.
\begin{theorem}\label{thm-1}
Any $L^2$ solution to the Euler-Lagrange equation \eqref{Euler-Lagrange} is smooth on $S^1$.
\end{theorem}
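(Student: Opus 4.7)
The plan is to bootstrap regularity of $f$ from $L^2(S^1)$ up to $C^\infty(S^1)$ using the Euler–Lagrange equation \eqref{Euler-Lagrange} as an iterative engine. Write $T(f):=f\sigma*f\sigma*f\sigma*\tilde f\sigma*\tilde f\sigma$, a compactly supported function on $\R^2$ whose spatial Fourier transform is a constant multiple of $|F|^4F$, with $F:=\widehat{f\sigma}$; the equation then reads $\lambda f=T(f)|_{S^1}$. A useful a priori bound comes from the Jacobi–Anger expansion $F(r,\phi)=2\pi\sum_n(-i)^n\hat f(n)J_n(r)e^{in\phi}$: Cauchy–Schwarz combined with the identity $\sum_n J_n(r)^2=1$ yields $\|F\|_{L^\infty(\R^2)}\lesssim\|f\|_{L^2(S^1)}$. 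Together with Tomas–Stein, $F\in L^6(\R^2)\cap L^\infty(\R^2)$, and interpolation gives $F\in L^p(\R^2)$ for every $p\in[6,\infty]$.

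\textbf{Stage A (integrability: $L^2\to L^\infty$).} Improve $f$ successively from $L^p(S^1)$ to $L^q(S^1)$ with $q>p$ by combining: (i) the interpolated restriction bound $\|F\|_{L^{3p}(\R^2)}\lesssim\|f\|_{L^p(S^1)}$ for $p\in[2,\infty]$, obtained from Tomas–Stein at $p=2$ and the trivial bound at $p=\infty$; (ii) Hausdorff–Young paired with the compact support of $T(f)$ in $B(0,5)$; and (iii) Zygmund/Carleson–Sjölin-type improvements of the restriction inequality on $S^1$, which for $f\in L^\infty(S^1)$ enhance the bound to $F\in L^p(\R^2)$ for every $p>4$. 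After finitely many iterations of \eqref{Euler-Lagrange} one reaches $f\in L^\infty(S^1)$; at that point $|F|^4F\in L^1(\R^2)$, so $T(f)$ is continuous on $\R^2$, forcing $f$ itself to be continuous.

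\textbf{Stage B (smoothness).} Pass to Fourier coefficients on $S^1$. Polar coordinates and Jacobi–Anger yield
\begin{equation*}
\lambda\,\hat f(n)\;=\;c\,i^n\int_0^\infty J_n(r)\,r\,G_n(r)\,dr,\qquad G_n(r):=\int_0^{2\pi}|F(r,\phi)|^4F(r,\phi)e^{-in\phi}\,d\phi.
\end{equation*}
Splitting at $r=n/2$: the super-exponential bound $|J_n(r)|\lesssim(er/2n)^n$ renders the low-frequency piece $r\le n/2$ exponentially small in $n$, while for $r\ge n/2$ the pointwise bound $|J_n(r)|\lesssim r^{-1/2}$ combined with the improving $L^p$-integrability of $|F|^4F$ (which sharpens as $f$ gains regularity) yields polynomial decay. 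Iterating the estimate—each pass of the Euler–Lagrange equation strictly gaining additional decay—produces faster-than-polynomial decay of $|\hat f(n)|$, so $f\in H^s(S^1)$ for every $s>0$ and hence $f\in C^\infty(S^1)$ by Sobolev embedding.

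The main obstacle is Stage A: the endpoint character of the Tomas–Stein inequality means that a single application of Hausdorff–Young falls just short of placing $|F|^4F$ in $L^1(\R^2)$ from $f\in L^2$ alone. Closing the loop from $L^2$ to $L^\infty(S^1)$ therefore requires a careful iteration of \eqref{Euler-Lagrange}, systematically exploiting the compact support of $T(f)$ and the refined restriction estimates to strictly gain integrability at each step. Once $f\in L^\infty$ is reached, Stage B proceeds by a relatively routine bootstrap of Bessel oscillatory integrals.
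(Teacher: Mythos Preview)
Your Fourier-side strategy is genuinely different from the paper's, which never leaves the physical side: it works in difference-quotient spaces $\mathcal{H}^s(S^1)$ and obtains the crucial first gain by a contraction-mapping argument rather than by integrability bootstrap. The gap in your plan is exactly the point you flag as ``the main obstacle'' but do not resolve: the \emph{first} step of Stage~A produces no strict gain. From $f\in L^2(S^1)$ you get $F\in L^6(\R^2)\cap L^\infty(\R^2)$, hence $|F|^4F\in L^r(\R^2)$ only for $r\ge 6/5$. Hausdorff--Young then gives $T(f)\in L^{r'}(\R^2)$, but an $L^{r'}(\R^2)$ bound carries no trace on the null set $S^1$. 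If instead you use dual restriction to restrict $T(f)=c\,\mathcal{F}^{-1}(|F|^4F)$ to $S^1$, the input exponent $q'=6/5$ returns exactly $T(f)|_{S^1}\in L^2(S^1)$, and larger $q'$ gives a \emph{worse} target. To do better one needs $F\in L^p$ for some $p<6$; the Carleson--Sj\"olin range delivers this only once $f\in L^q(S^1)$ for some $q>2$, which is precisely what you are trying to prove. The iteration is circular at its first step. Stage~B has the same defect: with $|J_n(r)|\lesssim r^{-1/2}$ and $F\in L^{4+\varepsilon}$, H\"older on the tail $\int_{n/2}^\infty r^{1/2}|G_n(r)|\,dr$ gives at best $|\hat f(n)|\lesssim n^{-1/10+\varepsilon}$, which does not place $\hat f$ in any $\ell^2$ space with positive weight; regularity of $f$ does not improve the $|\xi|^{-1/2}$ stationary-phase decay of $F$, so it is unclear what drives subsequent gains.

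The paper's mechanism for breaking this endpoint circularity is worth contrasting. One writes $f=\phi_\epsilon+g_\epsilon$ with $\phi_\epsilon\in C^\infty$ and $\|g_\epsilon\|_{L^2}<\epsilon$, then rearranges the Euler--Lagrange equation as $g_\epsilon=\mathcal{L}(\phi_\epsilon,g_\epsilon)+\mathcal{N}(\phi_\epsilon,g_\epsilon)$, where $\mathcal{L}$ is affine in $g_\epsilon$ with all other factors equal to the smooth $\phi_\epsilon$, and $\mathcal{N}$ is at least quadratic in $g_\epsilon$. Then $\|\mathcal{L}\|_{\mathcal{H}^\alpha}\le C_\epsilon$ for every $\alpha>0$ while $\|\mathcal{L}\|_{L^2}\lesssim\epsilon$ and $\|\mathcal{N}\|_{L^2}\lesssim\epsilon^2$; interpolating, there is a small $s(\epsilon)>0$ for which the map $h\mapsto\mathcal{L}(\phi_\epsilon,g_\epsilon)+\mathcal{N}(\phi_\epsilon,h)$ is a contraction on a ball in $\mathcal{H}^{s(\epsilon)}$, forcing $g_\epsilon\in\mathcal{H}^{s(\epsilon)}$. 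The smallness of $g_\epsilon$ substitutes for the missing gain. After that, a separate smoothing estimate (four inputs in $\mathcal{H}^\epsilon$ and one in $L^2$ produce output in $\mathcal{H}^\delta$) bootstraps to $C^\infty$. Your scheme might be salvageable by grafting on a similar smooth--plus--small decomposition, but as written Stage~A does not start.
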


Our proof of this theorem follows roughly the similar lines as in \cite{Christ-Shao:extremal-for-sphere-restriction-II-characterizations} by using the contraction mapping theorem. The first step is to show that solutions to the generalized Euler-Lagrange equation gain some regularity depending on the critical points themselves; the second step is a bootstrap argument upgrading the regularity to infinity, see Section \ref{sec:proof}. The difficulty is that there is no useful formula for the convolution $\sigma *\sigma *\sigma*\sigma*\sigma $, see also \cite[Section 2]{Carneiro-Foschi-Silva-Thiele:2015:sharp-trilinear-inequality-S1}. However it is uniformly bounded by a simple application of the Hausdorff-Young inequality, which is used in Lemma \ref{le-3}. Theorem \ref{thm-1} is also established by Diogo Oliveira e. Silva and Rene Quilodr\'an in \cite{Silva-Quilodran:2021:S1smoothness} by studying the H\"older estimate of the four-fold convolution $\sigma *\sigma *\sigma*\sigma$. Their estimates are used by us as auxiliary lemmas in Section \ref{sec:crappie}. 

This paper is organized as follows. In Section \ref{sec:notation}, we set up some notations. In Section \ref{sec:proof}, we give the main argument showing that the extremizers to \eqref{TS} are smooth.

{\bf Acknowledgement.} The author was supported in part by the NSF grant DMS-1160981 and KU 2016 -2017 general research fund.

\section{Notation}\label{sec:notation}
For $s\ge 0$, $H^s= H^s(S^1) $ denotes the usual Sobolev space of functions having $s\ge 0$ derivatives in $L^2$. We also write $H^0$ by $L^2$. Consider the action of the group $O(2)$ of all rotations of $\mathbb{R}^2$ acting on $S^1$. This action gives rise in a natural way to actions on functions by 
$$\Theta (f) = f \circ \Theta$$
and on finite Borel measures on $\mathbb{R}^2$ by $\Theta_\ast (\mu) (E) = \mu (\Theta(E))$.  The extension satisfies the basic identity 
$$\Theta_\ast (\mu \ast \nu) = \Theta_\ast (\mu) \ast \Theta_\ast (\nu). $$

Let $\{X_j: j =1,2\}$ be two $C^\infty$ vector fields on $S^1$ which generate rotations about the two coordinate axes, where $X_j$ is along the $x_j$ direction on $\mathbb{R}^2$; thus $\operatorname{exp} (tX_j)$, the exponential map acting on $X_j$ \cite[Page 130]{Petersen:2016:RiemannianGemeotry}, is obtained by rotating $x\in \mathbb{R}^2$ by $t$ radians about the $j$-th coordinate axis. These two vector fields span the one dimension tangent space to $S^1$ at each of its points. So $H^1(S^1)$ is equal to the set of all $f\in L^2(S^1)$ for all $X_j(f) \in L^2(S^1)$ for all indices $j\in \{1,2\}$. 

For $\alpha\in (0,1)$, we denote by $\Lambda^\alpha$ the space of all H\"{o}lder continuous functions of order $\alpha$ on $S^1$, with norm 
$$\|f\|_{\Lambda^\alpha} = \|f\|_{C^0}+ \sup_{x\neq y} \frac {|f(x)-f(y)|}{|x-y|^\alpha}.$$
For $\alpha\in (0,1)$, $\Lambda^\alpha$ equals the set of all continuous functions $f$ for which there exists $C<\infty$ such that $$|\operatorname{exp}(tX_j)f(x)- f(x)| \le C |t|^\alpha$$ 
for all $t\in \mathbb{R}$ and $x\in S^1$ for $j=1,2$, with a corresponding equivalence of norms. We denote by $\operatorname{Lip}(S^1)$ the space of all Lipschitz continuous functions from $S^1$ to $\mathbb{C}$, equipped with the norm $$\|f\|_{C^0} + \sup_{x\neq y} \frac {|f(x)-f(y)|}{|x-y|}. $$

For $0\le s \notin \mathbb{Z}$, we write $s= k+\alpha$, where $k\in \mathbb{Z}$ and $\alpha \in (0,1)$. For $s\in (0,1)$, we define $\mathcal{H}^s$ to be the set of all $f\in L^2(S^1)$ for which 
$$\|f\|_{\mathcal{H}^s} = \|f\|_{L^2(S^1)} + \sum_{j=1}^2 \sup_{0<|t|\le 1} \frac {\|\operatorname{exp}(tX_j) f- f \|_{L^2(S^1)}}{|t|^s}$$ 
is finite. For $s=0$, we define $\mathcal{H}^0=L^2$. For $s = k +\alpha$ with $k\in \mathbb{Z}^+$ and $\alpha\in (0,1)$, $\mathcal{H}^s$ is the set of all $f\in L^2(S^1)$ for which 
$$ \|f\|_{\mathcal{H}^s} = \|f\|_{L^2(S^1)} +\sum_Y \sum_{j=1}^2 \sup_{0<|t|\le 1} \frac {\|Yf\circ\operatorname{exp}(tX_j)f-Yf \|_{L^2(S^1)}}{|t|^s} $$
is finite, where $Y$ ranges over the finite set of all compositions $X_{i_1} \circ X_{i_2} \circ \cdots \circ X_{i_m}$ with $0\le m\le k$ factors. Here $f = Yf$, where $Y$ has zero factors. The mapping
$f\mapsto \Theta(f) = f \circ \Theta$ maps $\mathcal{H}^s$ isometrically to $\mathcal{H}^s$, uniformly for all $\Theta\in O(2)$. For any $0<t<s$, it is not hard to see that $\mathcal{H}^s$ is contained in the Sobolev space $H^t$, and 
\begin{equation}\label{eq-51}
\|f\|_{H^t} \le C(s,t) \|f\|_{\mathcal{H}^s}
\end{equation} for all $f\in \mathcal{H}^s$, see for instance \cite[Chapter 5, Proposition 10 and Theorem 5]{Stein:1970:singular integrals}. 

\section{the proof}\label{sec:proof}
In this section, we prove Theorem \ref{thm-1}. We first show that solutions to the generalized Euler-Lagrange equation \eqref{Euler-Lagrange} gain some regularity, see Lemma \ref{le-4}. Then we upgrade the regularity to infinity, see Lemma \ref{le-5}. We begin with a trivial interpolation result.
\begin{lemma}\label{le-interpolation}
For $0<\beta<\alpha$,
$$ \|f\|_{\mathcal{H}^\beta} \le C\|f\|^{1-\frac \beta\alpha}_{\mathcal{H}^0} \|f\|^{\frac \beta\alpha}_{\mathcal{H}^\alpha}\sim\|f\|^{1-\frac \beta\alpha}_{L^2} \|f\|^{\frac \beta\alpha}_{\mathcal{H}^\alpha}.$$
\end{lemma}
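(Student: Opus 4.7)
The plan is to reduce the entire inequality to a one-parameter pointwise interpolation applied to each difference quotient appearing in the definitions of the $\mathcal{H}^s$ norms. First I would treat the base case $0<\beta<\alpha<1$. For every $j\in\{1,2\}$ and every $t$ with $0<|t|\le 1$, two elementary estimates are available,
\begin{equation*}
\|\operatorname{exp}(tX_j)f-f\|_{L^2}\le 2\|f\|_{L^2} \quad\text{and}\quad \|\operatorname{exp}(tX_j)f-f\|_{L^2}\le |t|^\alpha\,\|f\|_{\mathcal{H}^\alpha},
\end{equation*}
the first coming from the triangle inequality and rotation invariance of the $L^2$ norm and the second directly from the definition of $\mathcal{H}^\alpha$. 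Raising the first to the power $1-\beta/\alpha$ and the second to $\beta/\alpha$ and multiplying yields $\|\operatorname{exp}(tX_j)f-f\|_{L^2}\le C|t|^\beta\|f\|_{L^2}^{1-\beta/\alpha}\|f\|_{\mathcal{H}^\alpha}^{\beta/\alpha}$. Dividing through by $|t|^\beta$, taking the supremum over $t$, and summing in $j$ dominates the difference-quotient part of $\|f\|_{\mathcal{H}^\beta}$. The $\|f\|_{L^2}$ term of $\|f\|_{\mathcal{H}^\beta}$ is handled by the trivial split $\|f\|_{L^2}=\|f\|_{L^2}^{1-\beta/\alpha}\|f\|_{L^2}^{\beta/\alpha}$ combined with $\|f\|_{L^2}\le \|f\|_{\mathcal{H}^\alpha}$.

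For the general case $\alpha=k+\alpha'$ with $k\ge 1$, I would run the same pointwise interpolation term by term on each of the higher-order difference quotients $\|Yf\circ\operatorname{exp}(tX_j)-Yf\|_{L^2}$ that appear in the definition of the $\mathcal{H}^s$ norms as $Y$ ranges over compositions of the vector fields $X_i$ with at most $k$ factors. Against the trivial bound $\|Yf\circ\operatorname{exp}(tX_j)-Yf\|_{L^2}\le 2\|Yf\|_{L^2}$, one pairs the bound coming from the definition of $\|f\|_{\mathcal{H}^\alpha}$, and uses the fact that every $\|Yf\|_{L^2}$ with strictly fewer than $k$ factors is itself controlled by $\|f\|_{\mathcal{H}^\alpha}$ (and that $\|f\|_{L^2}\le \|f\|_{\mathcal{H}^\alpha}$).

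Since the result is essentially a Landau--Kolmogorov style interpolation with $\operatorname{exp}(tX_j)$ playing the role of a derivative, I do not anticipate any genuine obstacle; the only mild bookkeeping is making sure that, when $\alpha>1$, every seminorm appearing in $\|f\|_{\mathcal{H}^\beta}$ is matched by a corresponding seminorm in $\|f\|_{\mathcal{H}^\alpha}$ so that the pointwise interpolation can be applied consistently.
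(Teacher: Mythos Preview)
Your proposal is correct and is essentially the same argument as the paper's: both split $\|\operatorname{exp}(tX_j)f-f\|_{L^2}$ as a product of its $(1-\beta/\alpha)$-th and $(\beta/\alpha)$-th powers, bound the first factor by $C\|f\|_{L^2}$ and the second by $|t|^\alpha\|f\|_{\mathcal{H}^\alpha}$, and handle the $L^2$ summand by the trivial inequality $\|f\|_{L^2}\le\|f\|_{\mathcal{H}^\alpha}$. Your additional paragraph on the case $\alpha>1$ goes slightly beyond what the paper's proof writes out (the paper only displays the $0<\beta<\alpha<1$ computation, which is all that is used later), but the extension you sketch is straightforward and consistent with the same idea.
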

\begin{proof}
The inequality follows from
\begin{equation}\label{eq-5}
\|f\|_{L^2} \le \|f\|^{1-\frac \beta\alpha}_{L^2} \|f\|^{\frac \beta\alpha}_{\mathcal{H}^\alpha}
\end{equation}
and for $0<|t|\le 1$, 
\begin{align*}
\frac {\|\operatorname{exp}(tX_j)f-f\|_{L^2}}{|t|^\beta} & =\|\operatorname{exp}(tX_j)f-f\|_{L^2}^{1-\frac \beta\alpha} \left( \frac {\|\operatorname{exp}(tX_j)f-f\|_{L^2}}{|t|^\alpha}\right)^{\frac \beta\alpha} \\
& \le C \|f\|_{L^2}^{1-\frac \beta\alpha}\|f\|^{\frac \beta\alpha}_{\mathcal{H}^\alpha}.
\end{align*}
\end{proof}

\begin{lemma}\label{le-2}
Let $\mu=\sigma \ast \sigma \ast \sigma \ast \sigma \ast \sigma$. Then $\|\mu\|_{L^\infty (\{|x|\le 5\})} \le C$ for some constant $C>0$. 
\end{lemma}
\begin{proof}
Recall that for $0<|x|<2$, $f_0(x):= \sigma * \sigma(x) = \frac {4}{|x|\sqrt{4-|x|^2}}$, see for instance \cite{Foschi-Klainerman:2000:bilinear-space-time-estimates-for-wave}. This can be also computed by using the definition of the convolution of two circle measures, see for instance \cite[Page 489, Equation (67)]{Stein:1993}. Indeed, for any $0<\epsilon<\frac {1}{10}$,  let $A_\epsilon :=\{ y\in \mathbb{R}^2:\, 1-\epsilon \le |y|\le 1+\epsilon \}$, 
\begin{align*}
\sigma *\sigma (x) &:= \lim_{\epsilon\to 0} \frac {2|A_\epsilon \cap \{x-A_\epsilon \}|} {4\epsilon^2} \\
& = \lim_{\epsilon\to 0} \frac {8\epsilon^2}{4\epsilon^2 \sin 2\theta} \\
&= \frac {1}{\sin \theta\cos\theta  } = \frac {4}{|x|\sqrt{4-|x|^2}},
\end{align*}
where $\sin \theta = |x|/2$ and $\theta $ is the angle the vector $x/2$ is facing to in the right triangle. 
\begin{figure}[h]
        \centering
        \includegraphics[width=0.6\textwidth]{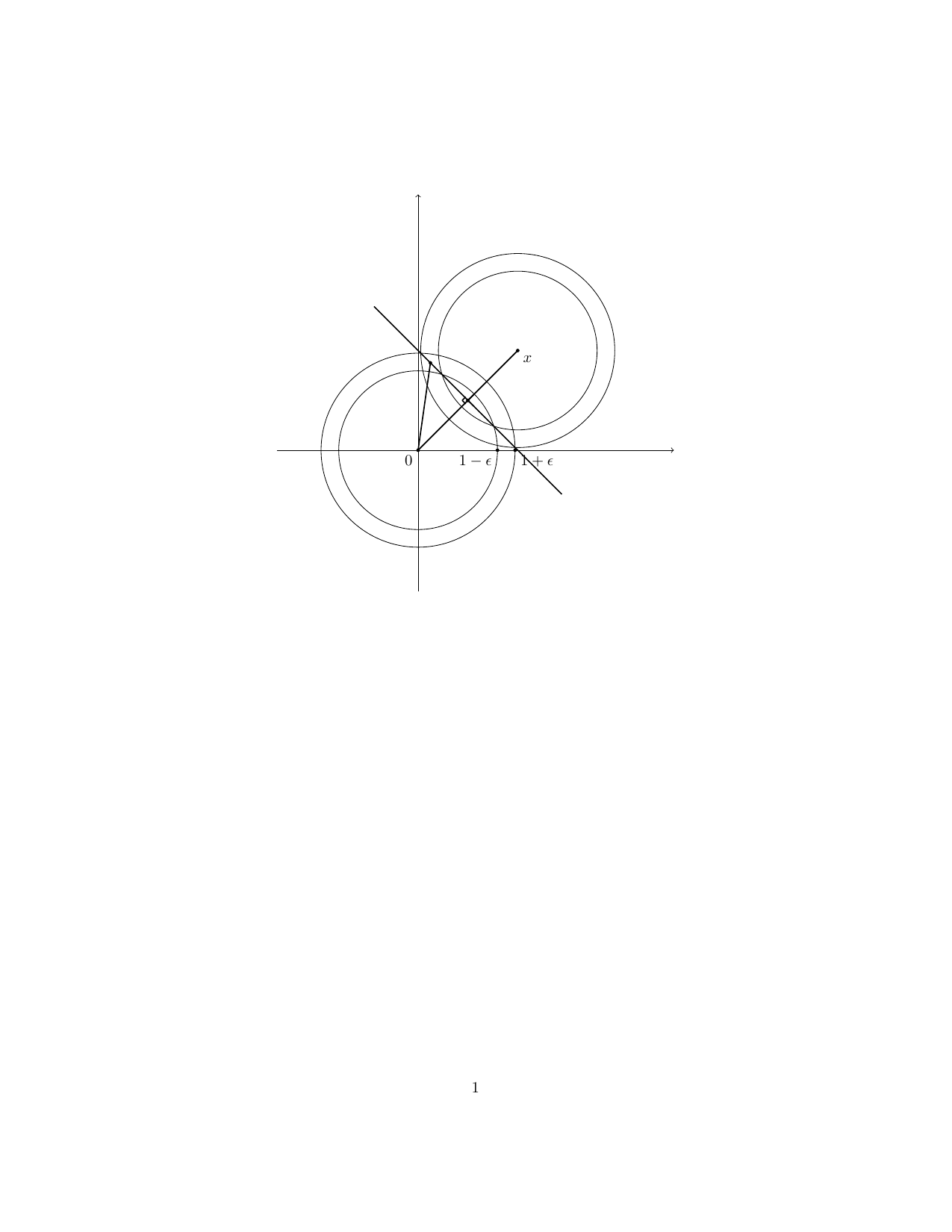}
        \caption{The convolution of two measures on the circle.}
        \label{fig-1}
\end{figure}

Then we have $$\mu= f_0*f_0*\sigma. $$
Then since $\|f_0\|_{L^1(\mathbb{R}^2)}<\infty,$ $\mu \in L^1(\mathbb{R}^2)$ by Funibi's theorem and Young's inequality. On the other hand, $\widehat{\mu} = \widehat{\sigma}^5 \in L^1$ from the decay estimate of $\widehat{\sigma}$, i.e., $|\widehat{\sigma }(x)| \le |x|^{-1/2}$,  for sufficiently large $|x|$. Thus from the Fourier inversion formula \cite[Corollary 1.21]{Stein-Weiss:1971:Fouerier Analysis}, we have 
$$ \mu(x) =\int e^{ix\cdot \xi } \widehat{\mu}(\xi) d\xi. $$
Thus an application of the $L^1\to L^\infty $ Hausdorff-Young inequality or the Riemann-Lebesgue lemma concludes the proof. 
\end{proof}

\begin{lemma}\label{le-3}
Suppose that $f_i\in \mathcal{H}^s$ for $i=1,\cdots, 5$ and $s\ge 0$. Then
$$\left\|f_1\sigma \ast f_2\sigma \ast f_3\sigma \ast f_4\sigma \ast f_5\sigma\right\|_{\mathcal{H}^s}\le C \prod_{i=1}^5 \|f_i\|_{\mathcal{H}^s}.$$
\end{lemma}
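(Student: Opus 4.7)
Set $F:=f_1\sigma\ast f_2\sigma\ast f_3\sigma\ast f_4\sigma\ast f_5\sigma$. The plan is to establish the estimate at $s=0$ first and then bootstrap to general $s\ge 0$ by exploiting the rotation equivariance of the convolution already recorded in Section~\ref{sec:notation}.

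\textbf{Base case $s=0$.} By $L^2(S^1,d\sigma)$ duality,
\[
\|F\|_{L^2(S^1)}=\sup_{\|g\|_{L^2(S^1)}=1}\Bigl|\int_{S^1}F\,\overline{g}\,d\sigma\Bigr|.
\]
Regarding $\overline{g}\sigma$ as a finite Borel measure on $\R^2$ and applying Parseval, the pairing equals a fixed constant multiple of
\[
\int_{\R^2}\widehat{f_1\sigma}(\xi)\,\widehat{f_2\sigma}(\xi)\,\widehat{f_3\sigma}(\xi)\,\widehat{f_4\sigma}(\xi)\,\widehat{f_5\sigma}(\xi)\,\overline{\widehat{g\sigma}(\xi)}\,d\xi.
\]
H\"older with six equal exponents reduces matters to bounding $\|\widehat{\phi\sigma}\|_{L^6(\R^2)}$ by $C\|\phi\|_{L^2(S^1)}$ for every $\phi\in L^2(S^1)$. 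The Tomas--Stein inequality \eqref{TS} supplies $\|\widehat{\phi\sigma}\|_{L^{9/4}}\le\mathcal{R}\|\phi\|_{L^2}$, while the trivial pointwise bound $|\widehat{\phi\sigma}(\xi)|\le\|\phi\|_{L^1(S^1)}\le C\|\phi\|_{L^2(S^1)}$ gives $L^\infty$ control. Log-convexity of $L^p$-norms (a single application of H\"older) interpolates these to yield the required $L^6$-estimate, and multiplying through gives $\|F\|_{L^2(S^1)}\le C\prod_{i=1}^5\|f_i\|_{L^2(S^1)}$.

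\textbf{The case $s\in(0,1)$.} The identities $\Theta_\ast(\mu\ast\nu)=\Theta_\ast\mu\ast\Theta_\ast\nu$ and $\Theta_\ast(f\sigma)=\Theta(f)\sigma$ combine to give the equivariance
\[
\Theta(F)=\Theta(f_1)\sigma\ast\Theta(f_2)\sigma\ast\Theta(f_3)\sigma\ast\Theta(f_4)\sigma\ast\Theta(f_5)\sigma,\qquad \Theta\in O(2).
\]
Specializing $\Theta=\exp(tX_j)$ and expanding $\Theta(F)-F$ as the standard telescoping sum of five convolutions, each containing exactly one factor of the form $(\Theta(f_i)-f_i)\sigma$ and four factors of the form $f_k\sigma$ or $\Theta(f_k)\sigma$, the base case applied termwise yields
\[
\|\Theta(F)-F\|_{L^2(S^1)}\le C\sum_{i=1}^5\|\Theta(f_i)-f_i\|_{L^2(S^1)}\prod_{k\ne i}\|f_k\|_{L^2(S^1)}.
\]
Dividing by $|t|^s$, taking the supremum over $0<|t|\le 1$ and $j\in\{1,2\}$, and combining with the $L^2$ estimate finishes this range.

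\textbf{General $s>1$.} For $s=k+\alpha$ with $k\in\Z^+$ and $\alpha\in(0,1)$, differentiating the equivariance identity and iterating the Leibniz rule writes each $Y(F)$, for $Y=X_{i_1}\circ\cdots\circ X_{i_m}$ with $m\le k$, as a finite sum
\[
Y(F)=\sum c\,(D_1f_1)\sigma\ast(D_2f_2)\sigma\ast(D_3f_3)\sigma\ast(D_4f_4)\sigma\ast(D_5f_5)\sigma,
\]
where each $D_i$ is a composition of at most $m$ of the vector fields $X_1,X_2$. The previous step applied with $D_i(f_i)$ in place of $f_i$ controls each summand in $\mathcal{H}^\alpha$ by $C\prod_{i=1}^5\|D_i(f_i)\|_{\mathcal{H}^\alpha}\le C\prod_{i=1}^5\|f_i\|_{\mathcal{H}^s}$; summing over $Y$ concludes the proof.

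\textbf{Main obstacle.} The only genuinely nontrivial ingredient is the base case. Restricting the five-fold convolution $F$, which a priori lives on $\R^2$, to the measure-zero subset $S^1\subset\R^2$ and estimating its $L^2(d\sigma)$ norm is a bona fide Fourier-restriction problem, so the Tomas--Stein inequality must enter: elementary manipulations using Lemma~\ref{le-2} alone yield at most an $L^2(\R^2)$-estimate, and the absence (noted after Theorem~\ref{thm-1}) of any usable formula for $\sigma^{\ast 5}$ rules out a more direct computation. Once the base case is secured, the remainder is a mechanical consequence of equivariance and the Leibniz rule.
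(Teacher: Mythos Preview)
Your argument for $s>0$ is essentially the paper's: both exploit the rotation equivariance $\Theta(F)=\Theta(f_1)\sigma*\cdots*\Theta(f_5)\sigma$, telescope $\Theta(F)-F$ into five pieces, and reduce to the $L^2$ case. (You are more explicit than the paper about the integer part via the Leibniz expansion, which is a welcome clarification.)

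Your base case, however, takes a genuinely different route. The paper does \emph{not} pass to the Fourier side; it applies Cauchy--Schwarz pointwise,
\[
|f_1\sigma*\cdots*f_5\sigma(x)|\le\bigl(|f_1|^2\sigma*\cdots*|f_5|^2\sigma(x)\bigr)^{1/2}\bigl(\sigma^{*5}(x)\bigr)^{1/2},
\]
squares, integrates over $S^1$, and pulls out $\sup_{S^1}\sigma^{*5}\le C$ from Lemma~\ref{le-2}. Thus your ``Main obstacle'' paragraph misreads the situation: the paper obtains the $L^2(S^1)$ bound precisely from Lemma~\ref{le-2} together with Cauchy--Schwarz, without invoking \eqref{TS} at all. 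Your Fourier/H\"older approach is clean and self-contained, while the paper's is more elementary and keeps the argument independent of the restriction inequality whose extremizers are being studied.

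One slip to fix: you quote the Tomas--Stein exponent as $9/4$, evidently reading the displayed $2+\tfrac{d}{4}$ in \eqref{TS}; the correct exponent for $S^1$ is $2+\tfrac{4}{d}=6$. Your interpolation from $L^{9/4}$ up to $L^6$ is therefore both unnecessary and, as written, invalid (the adjoint restriction estimate fails below $L^6$). Simply use Tomas--Stein at $L^6$ directly and drop the interpolation step; the H\"older-with-six-factors argument then goes through unchanged.
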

\begin{proof}
By the Cauchy-Schwarz inequality,
\begin{equation}\label{eq-10}
\left| f_1\sigma \ast\cdots \ast f_5\sigma(x)\right| \le \left( |f_1|^2\sigma \ast\cdots \ast |f_5|^2\sigma(x)\right)^{1/2}\left(\sigma \ast \cdots \ast \sigma (x)\right)^{1/2}.
\end{equation}
If we integrate both sides, by Lemma \ref{le-2},
\begin{equation}\label{eq-11}
\begin{split}
\|f_1\sigma \ast\cdots \ast f_5\sigma \|^2_{L^2(S^1)} &\le \int_{S^1} \left( |f_1|^2\sigma \ast\cdots \ast |f_5|^2\sigma(x) \right) \left|\sigma \ast \cdots \ast \sigma (x)\right| d\sigma\\
& \le\sup_{x\in S^1} \left|\sigma \ast \cdots \ast \sigma (x)\right| \int_{S^1} \left||f_1|^2\sigma \ast\cdots \ast |f_5|^2\sigma(x) \right| d\sigma\\
&\le C \prod_{i=1}^5 \|f_i\|^2_{L^2}.
\end{split}
\end{equation}
This proves the lemma when $s=0$.

Let $s>0$. For  $0<|t|\le 1$, we just need to prove
\begin{equation}\label{eq-12}
\begin{split}
&\frac {\|\operatorname{exp}(tX_j)\bigl( f_1\sigma \ast\cdots \ast f_5\sigma\bigr)-f_1\sigma \ast\cdots \ast f_5\sigma\|^2_{L^2}}{|t|^{2s}}\\
&=\int_{S^1} \frac {| \bigl(f_1\sigma \ast\cdots \ast f_5\sigma\bigr)\circ \operatorname{exp}(tX_j) (y) -\bigl(f_1\sigma \ast\cdots \ast f_5\sigma\bigr) (y)|^2}{|t|^{2s}} d\sigma(y)\\
&\le C \prod_{i=1}^5 \|f_i\|^2_{\mathcal{H}^s} .
\end{split}
\end{equation}
We compute that, for $j=1,2$ and $0<|t|\le 1$,
\begin{equation}\label{eq-14}
\begin{split}
& \bigl(f_1\sigma \ast\cdots \ast f_5\sigma\bigr)\circ \operatorname{exp}(tX_j) -\bigl(f_1\sigma \ast\cdots \ast f_5\sigma\bigr) \\
&=\bigl(f_1 \circ \operatorname{exp}(tX_j) -f_1 \bigr)\sigma \ast (\operatorname{exp}(tX_j) f_2)\sigma \ast (\operatorname{exp}(tX_j) f_3) \sigma\ast (\operatorname{exp}(tX_j) f_4)\sigma \ast (\operatorname{exp}(tX_j) f_5)\sigma+ \\
& +  (\operatorname{exp}(tX_j) f_1)\sigma \ast \bigl(f_2 \circ \operatorname{exp}(tX_j) -f_2 \bigr)\sigma \ast (\operatorname{exp}(tX_j) f_3) \sigma\ast (\operatorname{exp}(tX_j) f_4)\sigma \ast (\operatorname{exp}(tX_j) f_5)\sigma+\\
& +\cdots+ (\operatorname{exp}(tX_j) f_1)\sigma \ast (\operatorname{exp}(tX_j) f_2)\sigma  \ast (\operatorname{exp}(tX_j) f_3) \sigma\ast (\operatorname{exp}(tX_j) f_4)\sigma \ast \bigl(f_5 \circ \operatorname{exp}(tX_j) -f_5 \bigr).
\end{split}
\end{equation}
For the first term in \eqref{eq-14}, by the Cauchy-Schwarz inequality,
\begin{equation}\label{eq-15}
\begin{split}
&\bigl(f_1 \circ \operatorname{exp}(tX_j) -f_1 \bigr)\sigma \ast (\operatorname{exp}(tX_j) f_2)\sigma \ast (\operatorname{exp}(tX_j) f_3) \sigma\ast (\operatorname{exp}(tX_j) f_4)\sigma \ast (\operatorname{exp}(tX_j) f_5)\sigma \\
& \lesssim \left( |f_1 \circ \operatorname{exp}(tX_j) -f_1 |^2\sigma \ast |\operatorname{exp}(tX_j) f_2|^2\sigma \ast |\operatorname{exp}(tX_j) f_3|^2\sigma\ast |\operatorname{exp}(tX_j) f_4|^2\sigma* \right. \\
&\left.  \ast |\operatorname{exp}(tX_j) f_5|^2\sigma\right)^{1/2} \times \left( \sigma \ast \cdots\ast \sigma \right)^{1/2}.
\end{split}
\end{equation}
Applying the same reasoning to other terms in \eqref{eq-15} and going back to \eqref{eq-12}, we see the claim in Lemma \ref{le-3} for $s>0$ is proved. Thus we finish the proof of Lemma \ref{le-3}.
\end{proof}

Next we show that solutions to the Euler-Lagrange equation  \eqref{Euler-Lagrange} gain some regularity.
\begin{lemma}\label{le-4}
Suppose that $f\in L^2$ satisfies the Euler-Lagrange equation \eqref{Euler-Lagrange}. Then $f\in \mathcal{H}^s$ for some $s>0$. In particular, $f\in H^t$ for all $0\le t <s$.
\end{lemma}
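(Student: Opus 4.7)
The plan is to use the Euler-Lagrange equation \eqref{Euler-Lagrange}, rewritten as $\lambda f = T(f)|_{S^1}$ with $T(f) := f\sigma \ast f\sigma \ast f\sigma \ast \tilde f\sigma \ast \tilde f\sigma$, to transfer regularity from the five-fold convolution to $f$. A direct application of Lemma \ref{le-3} with $s=0$ yields only $\|T(f)\|_{L^2(S^1)} \le C\|f\|_{L^2}^5$, reproducing the a priori bound without any gain, so the crux of the proof is to establish a sharper estimate of the form
\[
\|\exp(tX_j)T(f) - T(f)\|_{L^2(S^1)} \le C(f)\,|t|^{s_0}
\]
for some $s_0 > 0$. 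This would give $T(f) \in \mathcal{H}^{s_0}(S^1)$ and hence, via the Euler-Lagrange equation, $f = \lambda^{-1}T(f)|_{S^1} \in \mathcal{H}^{s_0}$.

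To produce such an estimate I would work on the Fourier side in $\mathbb{R}^2$, where $\widehat{T(f)}(\xi) = |\widehat{f\sigma}(\xi)|^4\,\widehat{f\sigma}(\xi)$. Expanding $\exp(tX_j)T(f) - T(f)$ via the telescoping identity of \eqref{eq-14}, each of the five summands has Fourier transform containing one factor of the form $\widehat{f\sigma}(R_t\xi) - \widehat{f\sigma}(\xi)$ (where $R_t$ is the induced rotation on $\mathbb{R}^2$) together with four further factors of $\widehat{f\sigma}$ or its complex conjugate at $R_t\xi$. The difference factor admits two complementary pointwise bounds:
\[
|\widehat{f\sigma}(R_t\xi) - \widehat{f\sigma}(\xi)| \le 2\|\widehat{f\sigma}\|_{L^\infty} \lesssim \|f\|_{L^2}, \qquad |\widehat{f\sigma}(R_t\xi) - \widehat{f\sigma}(\xi)| \lesssim |t|\,|\xi|\,\|f\|_{L^2},
\]
the second because $y\in S^1$ is bounded, so $\widehat{f\sigma}$ is $C^\infty$ in $\xi$ with $\|\nabla\widehat{f\sigma}\|_{L^\infty}\lesssim \|f\|_{L^2}$. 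I would then pass from $\mathbb{R}^2$ to $S^1$ by the dual Tomas-Stein inequality $\|F|_{S^1}\|_{L^2(\sigma)} \lesssim \|\widehat{F}\|_{L^{6/5}(\mathbb{R}^2)}$ (which follows from the Tomas-Stein $L^6$-bound on $\widehat{f\sigma}$ together with H\"older's inequality and Parseval) and split the resulting $L^{6/5}$-integral at a cutoff $|\xi| = N(t)$: on $\{|\xi| \le N\}$ the gradient bound contributes a factor of order $(|t|N)^{6/5}$ times a power of $\|\widehat{f\sigma}\|_{L^6}$, while on $\{|\xi| > N\}$ the trivial bound combined with the vanishing tail $\tau(N) := \int_{|\xi|>N}|\widehat{f\sigma}|^6 \to 0$ controls the contribution. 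Optimizing the cutoff $N = N(t)$ yields the claimed $|t|^{s_0}$-gain.

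The main obstacle is making the low/high-frequency split quantitative: the $L^6$-tail $\tau(N)$ tends to zero but without a uniform decay rate for arbitrary $f \in L^2(S^1)$, so the exponent $s_0$ and the implicit constant must be allowed to depend on the individual $f$, which is consistent with the statement of Lemma \ref{le-4}. Once $f \in \mathcal{H}^{s_0}$ is established, the chain of inclusions \eqref{eq-51}, namely $\mathcal{H}^{s_0} \subset H^t$ for $0 \le t < s_0$, delivers the final conclusion $f \in H^t$ and sets up the bootstrap performed in the next lemma.
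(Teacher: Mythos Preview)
Your approach has a genuine gap at the optimization step. You correctly reduce matters to an estimate of the form
\[
\|\exp(tX_j)T(f)-T(f)\|_{L^2(S^1)} \;\lesssim\; |t|\,N^{c} \;+\; \tau(N)^{5/6},
\]
with $\tau(N)=\int_{|\xi|>N}|\widehat{f\sigma}(\xi)|^{6}\,d\xi$, and you propose to optimize over $N$. But this optimization yields a bound $O(|t|^{s_0})$ for some $s_0>0$ \emph{only if} $\tau(N)$ decays at least polynomially in $N$. For a general $f\in L^2(S^1)$ the Tomas--Stein inequality guarantees merely $\tau(N)\to 0$, with no rate whatsoever; if, say, $\tau(N)\asymp (\log N)^{-1}$ then balancing the two terms gives only $(\log(1/|t|))^{-c'}$, which is $o(1)$ but not $O(|t|^{s_0})$ for any positive $s_0$. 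You note that $s_0$ is allowed to depend on $f$, but that does not help: the issue is that your argument produces $s_0=0$ unless some extra input rules out arbitrarily slow tails, and nothing in the Euler--Lagrange equation has been used for that purpose. Indeed, the paper's own smoothing estimate (Proposition~\ref{prop-1}) requires \emph{four} of the five convolution factors to lie in $\mathcal{H}^\epsilon$ already; no gain is claimed when all five are only in $L^2$, and your sketch would, if it worked, prove exactly that stronger unproved statement.

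The paper's proof avoids this obstruction by a quite different mechanism. One writes $f=\phi_\epsilon+g_\epsilon$ with $\phi_\epsilon\in C^\infty$ and $\|g_\epsilon\|_{L^2}<\epsilon$, and expands the Euler--Lagrange equation as $g_\epsilon=\mathcal{L}(\phi_\epsilon,g_\epsilon)+\mathcal{N}(\phi_\epsilon,g_\epsilon)$, where $\mathcal{L}$ collects every term with at most one $g_\epsilon$-factor and $\mathcal{N}$ is at least quadratic in $g_\epsilon$. Each term of $\mathcal{L}$ carries at least four smooth factors $\phi_\epsilon$, so $\mathcal{L}\in\Lambda^\alpha$ for every $\alpha$; interpolating against $\|\mathcal{L}\|_{L^2}=O(\epsilon)$ gives $\|\mathcal{L}\|_{\mathcal{H}^{s(\epsilon)}}\lesssim\epsilon^{7/8}$ for a suitably small $s(\epsilon)>0$. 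One then shows that $h\mapsto \mathcal{L}(\phi_\epsilon,g_\epsilon)+\mathcal{N}(\phi_\epsilon,h)$ is a contraction on a small ball in $\mathcal{H}^{s(\epsilon)}$, using Lemma~\ref{le-3} and the quadratic smallness $\|\mathcal{N}(\phi_\epsilon,h)\|_{\mathcal{H}^{s(\epsilon)}}=O(\epsilon^{3/2-})$. The fixed point lies in $\mathcal{H}^{s(\epsilon)}$, and since the same map is also a contraction in $L^2$ with $g_\epsilon$ as its fixed point, uniqueness forces $g_\epsilon\in\mathcal{H}^{s(\epsilon)}$, hence $f\in\mathcal{H}^{s(\epsilon)}$. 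The essential difference from your plan is that the regularity comes from the \emph{smooth} part $\phi_\epsilon$ placed in four slots, while the \emph{smallness} of $g_\epsilon$ (not any tail decay of $\widehat{f\sigma}$) is what closes the fixed-point argument.
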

\begin{proof}
For any $\epsilon>0$, we decompose $f$ such that $f=\phi_\epsilon+g_\epsilon$ such that $\|g_\epsilon\|_{L^2} <\epsilon$ and $\phi_\epsilon\in C^\infty$.

Recall that $$ \|\phi_\epsilon\|_{\mathcal{H}^s}=\|\phi_\epsilon\|_{L^2}+\sum_{j=1}^2\sup_{0<|t|\le 1} \frac {\|\operatorname{exp}(tX_j)f - f\|_{L^2(S^1)}}{ |t|^s},$$
and $$\|\phi_\epsilon\|_{\Lambda^s} =\|\phi_\epsilon\|_{L^\infty}+\sup_{x\neq y} \frac {|\phi_\epsilon(x)-\phi_\epsilon(y)|}{|x-y|^s}. $$
Then since $\phi_\epsilon \in C^\infty$,
\begin{equation}\label{eq-16}
\|\phi_\epsilon\|_{\mathcal{H}^s} \le C \|\phi_\epsilon\|_{\Lambda^s} <C_\epsilon<\infty.
\end{equation}
We remark that this bound depends on $\epsilon$.

From the Euler-Lagrange equation \eqref{Euler-Lagrange} and $f=\phi_\epsilon+ g_\epsilon$, we have
\begin{equation}\label{eq-17}
g_\epsilon = \mathcal{L}(\phi_\epsilon, g_\epsilon) +\mathcal{N}(\phi_\epsilon, g_\epsilon),
\end{equation}
where $\mathcal{L}$ is linear in $g_\epsilon$ and $\mathcal{N}$ is nonlinear in $g_\epsilon$. More precisely,
\begin{align*}
 \mathcal{L}&= -\phi_\epsilon+ \phi_\epsilon \sigma*\phi_\epsilon \sigma*\phi_\epsilon \sigma* \tilde\phi_\epsilon\sigma*\tilde \phi_\epsilon\sigma+ \\
&\qquad  +2 \phi_\epsilon \sigma*\phi_\epsilon \sigma* \phi_\epsilon \sigma*\tilde\phi_\epsilon \sigma* \tilde g_\epsilon\sigma+3 \phi_\epsilon \sigma*\phi_\epsilon \sigma*\tilde \phi_\epsilon \sigma*\tilde\phi_\epsilon \sigma*  g_\epsilon\sigma;
 \end{align*}
 and 
  \begin{align*} 
 \mathcal{N} &= g_\epsilon \sigma*g_\epsilon \sigma* g_\epsilon \sigma*\tilde g_\epsilon \sigma*\tilde g_\epsilon\sigma \\
 &+ 3 g_\epsilon \sigma*g_\epsilon \sigma* \tilde g_\epsilon \sigma*\tilde g_\epsilon \sigma* \phi_\epsilon\sigma+2 g_\epsilon \sigma*g_\epsilon \sigma* g_\epsilon \sigma*\tilde g_\epsilon \sigma*\tilde \phi_\epsilon\sigma +\\
 & +3g_\epsilon \sigma* \tilde g_\epsilon \sigma* \tilde g_\epsilon \sigma*\phi_\epsilon \sigma* \phi_\epsilon\sigma+6 g_\epsilon \sigma* \ g_\epsilon \sigma* \tilde g_\epsilon \sigma*\phi_\epsilon \sigma* \tilde \phi_\epsilon\sigma+\\
 &+ g_\epsilon \sigma* g_\epsilon \sigma*  g_\epsilon \sigma*\tilde \phi_\epsilon \sigma* \tilde \phi_\epsilon\sigma+\tilde g_\epsilon \sigma*\tilde g_\epsilon \sigma* \phi_\epsilon \sigma*\phi_\epsilon \sigma* \phi_\epsilon\sigma \\
 &+ 6 g_\epsilon \sigma*\tilde g_\epsilon \sigma* \phi_\epsilon \sigma*\phi_\epsilon \sigma*\tilde \phi_\epsilon\sigma+ 3 g_\epsilon \sigma*g_\epsilon \sigma* \phi_\epsilon \sigma*\tilde \phi_\epsilon \sigma*\tilde \phi_\epsilon\sigma.
 \end{align*}
For any $\alpha>0$,
\begin{equation}\label{eq-19}
\|\mathcal{L}(\phi_\epsilon, g_\epsilon)\|_{\Lambda^\alpha} \le \|\phi_\epsilon\|_{\Lambda^\alpha}+C \|\phi_\epsilon\|^5_{\Lambda^\alpha}+ C \|\phi_\epsilon\|^4_{\Lambda^\alpha} \|g_\epsilon\|_{L^2}
\end{equation}
Since $\|\phi_\epsilon\|_{\Lambda^\alpha}<C_\epsilon<\infty$ and $\|g_\epsilon\|_{L^2}\le \|f\|_{L^2}$,
\begin{equation}\label{eq-20}
\|\mathcal{L}(\phi_\epsilon, g_\epsilon)\|_{\Lambda^\alpha} <C_\epsilon<\infty.
\end{equation}
Together with $\|\mathcal{L}(\phi_\epsilon, g_\epsilon)\|_{\mathcal{H}^\alpha} \le \|\mathcal{L}(\phi_\epsilon, g_\epsilon)\|_{\Lambda^\alpha}$,  this implies
\begin{equation}\label{eq-21}
\|\mathcal{L}(\phi_\epsilon, g_\epsilon)\|_{\mathcal{H}^\alpha} \le C_\epsilon<\infty.
\end{equation}
On the other hand, by Lemma \ref{le-3},
\begin{equation}\label{eq-22}
\begin{split}
\|\mathcal{N}(\phi_\epsilon, g_\epsilon)\|_{\mathcal{H}^0}
&\lesssim \|g_\epsilon\|^5_{\mathcal{H}^0}+ \|g_\epsilon\|^4_{\mathcal{H}^0} \|\phi_\epsilon\|_{\mathcal{H}^0}+ \|g_\epsilon\|^3_{\mathcal{H}^0}\|\phi_\epsilon\|^2_{\mathcal{H}^0}+ \|g_\epsilon\|^2_{\mathcal{H}^0} \|\phi_\epsilon\|^3_{\mathcal{H}^0}\\
&\lesssim \epsilon^5 +\epsilon^4 +\epsilon^3+\epsilon^2\lesssim \epsilon^2,
\end{split}
\end{equation}
as $\|g_\epsilon\|_{\mathcal{H}^0}\sim \|g_\epsilon\|_{L^2} \le \epsilon$ and $\|\phi_\epsilon\|_{\mathcal{H}^0}\sim \|\phi_\epsilon\|_{L^2} \le 1.$ By the triangle inequality we have 
\begin{equation}\label{eq-22a}
\|\mathcal{L}(\phi_\epsilon, g_\epsilon)\|_{\mathcal{H}^0} \le C\epsilon.
\end{equation}

Choosing $\epsilon$ sufficiently small, and interpolating between \eqref{eq-21} and \eqref{eq-22a}, we see that there exists $s(\epsilon)$ depending on $\epsilon$ such that
\begin{equation}\label{eq-23}
\|\mathcal{L}(\phi_\epsilon, g_\epsilon)\|_{\mathcal{H}^{s(\epsilon)}}\lesssim \epsilon^{\frac 78}.
\end{equation}
From the two bounds $\|\phi_\epsilon\|_{\mathcal{H}^{0}}\le 1$ and $\|\phi_\epsilon\|_{\mathcal{H}^{\alpha}}<C(\epsilon)<\infty$, again choosing $s(\epsilon)$ sufficiently small, we see that
\begin{equation}\label{eq-24}
\|\phi_\epsilon\|_{\mathcal{H}^{s(\epsilon)}} <\epsilon^{-1/5}.
\end{equation}
Next we use the argument of Picard's iteration to show that $f$ will gain some regularity. Fixing the small $\epsilon>0$ above, we know that $g_\epsilon \in L^2$ and $\phi_\epsilon\in C^\infty.$ Define the iteration mapping and the ball in $\mathcal{H}^{s(\epsilon)}$,
\begin{equation}\label{eq-25}
\begin{split}
\mathcal{L}_\epsilon (h)& =\mathcal{L}(\phi_\epsilon, g_\epsilon)+\mathcal{N}(\phi_\epsilon, h), \\
\mathcal{B} & =B(\mathcal{L}(\phi_\epsilon, g_\epsilon), \epsilon^{\frac 34}).
\end{split}
\end{equation}
In the following two steps, we show that $\mathcal{L}_\epsilon$ is a contraction map on $\mathcal{B}$. The first step is to show that $\mathcal{L}_\epsilon$ maps $\mathcal{B}$ to itself. The second step is to show that $\mathcal{L}_\epsilon$ Lipschitz with the Lipschitz constant strictly less than $1$.

{\bf Step 1.} For any $h\in \mathcal{B}$, by the triangle inequality and \eqref{eq-23},
\begin{equation}\label{eq-26}
\|h\|_{\mathcal{H}^{s(\epsilon)}} \le \|h-\mathcal{L}(\phi_\epsilon, g_\epsilon)\|_{\mathcal{H}^{s(\epsilon)}} + \|\mathcal{L}(\phi_\epsilon, g_\epsilon)\|_{\mathcal{H}^{s(\epsilon)}} \lesssim \epsilon^{\frac 34}+ \epsilon^{\frac 78}
\lesssim \epsilon^{\frac 34}.
\end{equation}
Then similarly as in proving \eqref{eq-22}, by \eqref{eq-24},
\begin{equation}\label{eq-27}
\|\mathcal{N}(\phi_\epsilon, h)\|_{\mathcal{H}^{s(\epsilon)}} \lesssim \epsilon^{\frac 34\times 5}+\epsilon^{\frac 34 \times 4} \epsilon^{-\frac 15}+ \epsilon^{\frac 34 \times 3} \epsilon^{-\frac 15\times 2}+ \epsilon^{\frac 34 \times 2} \epsilon^{-\frac 15 \times 3}\le  \epsilon^{\frac 34}/10.
\end{equation}
Then for $h\in \mathcal{B}$,
\begin{equation}\label{eq-28}
\|\mathcal{L}_\epsilon (h)-\mathcal{L}(\phi_\epsilon, g_\epsilon )\|_{\mathcal{H}^{s(\epsilon)}} =
\|\mathcal{N}(\phi_\epsilon, h)\|_{\mathcal{H}^{s(\epsilon)}}  \le \epsilon^{\frac 34}.
\end{equation}
This proves that $\mathcal{L}_\epsilon$ is a map from $\mathcal{B}$ to $\mathcal{B}$.

{\bf Step 2.} We take $h_1, h_2 \in \mathcal{B}$. Then by \eqref{eq-26},
\begin{equation}\label{eq-29}
\|h_1\|_{\mathcal{H}^{s(\epsilon)}} \lesssim \epsilon^{\frac 34}, \text{ and } \|h_2\|_{\mathcal{H}^{s(\epsilon)}} \lesssim \epsilon^{\frac 34}.
\end{equation}
Note that by \eqref{eq-24}, $\|\phi_\epsilon\|_{\mathcal{H}^{s(\epsilon)}} \le \epsilon^{-\frac 15}$, then by Lemma \ref{le-3},
\begin{equation}\label{eq-30}
\begin{split}
&\mathcal{L}_\epsilon(h_2) -\mathcal{L}_\epsilon(h_1) = \mathcal{N}(\phi_\epsilon, h_2) -\mathcal{N}(\phi_\epsilon, h_1) \\
&\lesssim \|h_2-h_1\|_{\mathcal{H}^{s(\epsilon)}}\left(5 \epsilon^{\frac 34\times 4}+ 5\times 5 \epsilon^{\frac 34\times 3-\frac 15}+10\times 3 \epsilon^{\frac 34\times 2-\frac 15\times 2}+ 10\times 2 \epsilon^{\frac 34-\frac 15\times 3}\right).
\end{split}
\end{equation}
To conclude, if taking $\epsilon$ sufficiently small,
\begin{equation}
\|\mathcal{L}_\epsilon(h_2) -\mathcal{L}_\epsilon(h_1)\|_{\mathcal{H}^{s(\epsilon)}} \le \alpha \|h_2-h_1\|_{\mathcal{H}^{s(\epsilon)}}
\end{equation}
for some $0<\alpha<1$. So $\mathcal{L}_\epsilon$ is a contraction mapping on $\mathcal{B}$. Therefore there exists a unique $h_\epsilon\in \mathcal{B}\subset \mathcal{H}^{s(\epsilon)}$ such that
\begin{equation}\label{eq-31}
h_\epsilon=\mathcal{L}_\epsilon (h_\epsilon) =\mathcal{L}(\phi_\epsilon, g_\epsilon)+ \mathcal{N}(\phi_\epsilon, h_\epsilon).
\end{equation}
Moreover $\|h_\epsilon\|_{H^{s(\epsilon)}} \lesssim \epsilon^{\frac 34}$. When $H^{s(\epsilon)}$ is replaced by $L^2$, the same argument implies that there exists a unique solution in $L^2$. 
Since $\mathcal{H}^{s(\epsilon)} \subset \mathcal{H}^0=L^2$, if $\epsilon $ is sufficiently small, then $h_\epsilon$ is also the unique $L^2$ solution with small $L^2 $ norm. We know that in $L^2$ there holds $$g_\epsilon=\mathcal{L}(\phi_\epsilon, g_\epsilon)+ \mathcal{N}(\phi_\epsilon, g_\epsilon),$$
and $g_\epsilon$ has small $L^2$ norm.  So $g_\epsilon$ agrees with $h_\epsilon$ in $L^2$. This upgrades $g_\epsilon \in \mathcal{H}^{s(\epsilon)}$. It in turn shows that $f\in \mathcal{H}^{s(\epsilon)}$. Note that $s(\epsilon)$ depends on $f$.
\end{proof}

The second main ingredient is a bootstrap lemma. 

\begin{lemma}\label{le-5}
For any $\epsilon>0$, there exists $\delta >0$ such that for any $s \in [\epsilon, \infty) \setminus \mathbb{Z}$ and any function $f\in \mathcal{H}^s(S^1)$, then 
\begin{equation}\label{eq-32}
f\sigma* f\sigma *f\sigma *f\sigma *f\sigma |_{S^1} \in \mathcal{H}^t (S^1)  
\end{equation} for all $t\in [0,s+\delta]\setminus \mathbb{Z}$.

\end{lemma}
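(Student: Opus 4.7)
My plan is to reduce the claim to a single base-case gain by repeatedly differentiating the five-fold convolution. Since $\sigma$ is rotation-invariant, the identity $\Theta_\ast(\mu\ast\nu) = \Theta_\ast\mu\ast\Theta_\ast\nu$ recalled in Section \ref{sec:notation}, applied to the flow $\Theta_t = \exp(tX_j)$, yields the product rule
\begin{equation*}
X_j\bigl((f_1\sigma)\ast\cdots\ast(f_5\sigma)\bigr)
=\sum_{i=1}^{5}(f_1\sigma)\ast\cdots\ast((X_jf_i)\sigma)\ast\cdots\ast(f_5\sigma).
\end{equation*}
Writing $s = k + \alpha$ with $k \in \mathbb{Z}_{\ge 0}$ and $\alpha \in (0,1)$, and iterating this Leibniz rule, any composition $Y$ of at most $k$ vector fields applied to $F := (f\sigma)^{\ast 5}|_{S^1}$ decomposes as a finite sum of terms $\ast_{i=1}^{5}(Y_i f)\sigma$ with $\sum_i \operatorname{ord}(Y_i) = \operatorname{ord}(Y)$. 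Controlling $\|F\|_{\mathcal{H}^{s+\delta}}$ thereby reduces to controlling each such term in $\mathcal{H}^{\alpha+\delta}$ for a suitable small $\delta = \delta(\epsilon) > 0$.

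If no single $Y_i$ absorbs all $k$ derivatives, every factor $Y_i f$ sits in $\mathcal{H}^{\alpha+1}$, and Lemma \ref{le-3} immediately gives $\mathcal{H}^{\alpha+\delta}$-control for any $\delta < 1-\alpha$ with no new input. The genuine obstruction is the ``top'' term in which a single $Y_{i_0}$ consumes all $k$ derivatives while the remaining four factors retain the undifferentiated $f$; up to a permutation this term has the form $(h\sigma)\ast(f\sigma)^{\ast 4}|_{S^1}$ with $h = Y_{i_0} f \in \mathcal{H}^{\alpha}$. The proof then reduces to establishing the key gain estimate
\begin{equation*}
\bigl\|(h\sigma)\ast(f\sigma)^{\ast 4}\big|_{S^1}\bigr\|_{\mathcal{H}^{\alpha+\delta}(S^1)}
\le C(\epsilon)\,\|h\|_{\mathcal{H}^\alpha}\,\|f\|_{\mathcal{H}^\alpha}^{4}
\end{equation*}
uniformly for $\alpha \in [\epsilon/2, 1)$, with $\delta$ depending only on $\epsilon$.

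For the key estimate, my plan is to exploit the smoothing effect of the four-fold convolution $H := (f\sigma)^{\ast 4}$. On the Fourier side, $\widehat H = \widehat{f\sigma}^{\,4}$ combines the stationary-phase decay $|\widehat\sigma(\xi)| \lesssim |\xi|^{-1/2}$ from the proof of Lemma \ref{le-2} with the Tomas-Stein bound $\widehat{f\sigma} \in L^{9/4}(\R^2)$ and the rotational regularity afforded by $f \in \mathcal{H}^\alpha$; together these place $H$ in a Hölder class $\Lambda^{\beta}(\{|x|\le 4\})$ for some $\beta = \beta(\epsilon) > 0$. Once $\beta > \alpha + \delta$, the representation
\begin{equation*}
((h\sigma)\ast H)(x) = \int_{S^1} h(\omega)\,H(x-\omega)\,d\sigma(\omega)
\end{equation*}
transfers the $\beta$-Hölder regularity of $H$ to an $L^\infty$ Hölder estimate in $x$, hence to a bound in $\mathcal{H}^{\alpha+\delta}(S^1)$ after interpolating (via Lemma \ref{le-interpolation}) against the $\mathcal{H}^\alpha$-bound from Lemma \ref{le-3}.

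The main obstacle is producing the Hölder exponent $\beta > 0$ uniformly, with $\beta > \alpha$ as $\alpha$ ranges over all of $(0,1)$. Pure stationary-phase decay of $\sigma$ yields only $|\widehat{\sigma^{\ast 4}}| \lesssim |\xi|^{-2}$, which is borderline in two dimensions; the extra mileage must come from the additional $L^{9/4}$-integrability of $\widehat{f\sigma}$ via Tomas-Stein and from the regularity assumption $f \in \mathcal{H}^\alpha$. This forces the gain $\delta$ to shrink with $\epsilon$, which explains why the hypothesis demands $s \ge \epsilon$ and why the conclusion excludes integer exponents. A secondary task is handling the case where $t$ crosses an integer $t = k+1$, which is done by rerunning the Leibniz expansion one step further and applying the same base-case estimate with $X_j f$ in place of $f$.
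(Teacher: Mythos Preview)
Your Leibniz reduction to the ``top'' term $(h\sigma)\ast(f\sigma)^{\ast 4}|_{S^1}$ with $h\in\mathcal H^\alpha$ is exactly how the paper (following \cite{Christ-Shao:extremal-for-sphere-restriction-II-characterizations}) organises Lemma~\ref{le-5}, so that part of the plan matches. The gap is in your mechanism for the base-case gain. You want $H=(f\sigma)^{\ast4}\in\Lambda^\beta(\{|x|\le 4\})$ for some $\beta>0$ and then to transfer this H\"older regularity to $(h\sigma)\ast H$ through the kernel representation. But $H$ is \emph{not} H\"older on the relevant set, not even bounded: already for $f\equiv 1$ (which lies in every $\mathcal H^\alpha$) one has $\sigma\ast\sigma(y)\sim c/|y|$ near $y=0$, hence $\sigma^{\ast4}(0)=\int_{\R^2}(\sigma\ast\sigma)(y)^2\,dy=+\infty$. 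Since $x-\omega=0$ occurs whenever $\omega=x\in S^1$, your integral $\int_{S^1}h(\omega)H(x-\omega)\,d\sigma(\omega)$ meets this singularity head-on. The borderline decay $|\widehat{\sigma^{\ast4}}|\sim|\xi|^{-2}$ that you yourself flag is not rescued by the extra inputs you list: the Tomas--Stein exponent here is $6$ (the paper's $2+\tfrac d4$ is a misprint for $2+\tfrac4d$), which only gives $\widehat{f\sigma}^{\,4}\in L^{3/2}(\R^2)$, never $L^1$; and the hypothesis $f\in\mathcal H^\alpha$ controls only the \emph{angular} behaviour of $\widehat{f\sigma}$, not its radial decay. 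The counterexample $f\equiv1$ shows that no amount of regularity on $f$ can place $H$ in any $\Lambda^\beta$.

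The paper's Proposition~\ref{prop-1} sidesteps this by never asking the four-fold convolution to be regular. It proves directly that if four of the five factors lie in $\mathcal H^\epsilon$ and the fifth is merely in $H^0$, then the full five-fold convolution restricted to $S^1$ lands in $\mathcal H^\delta$. The argument decomposes each $f_i=f_i^\sharp+f_i^b$ with $f_i^\sharp$ Lipschitz and $\|f_i^b\|_{H^0}$ small; with Lipschitz $f_i^\sharp$'s the difference $F(x)-F(y)$ is thrown onto one Lipschitz factor, while the weak factor $h$ is controlled by Cauchy--Schwarz against $\sigma^{\ast5}$, which \emph{is} bounded (since $|\widehat\sigma|^5\sim|\xi|^{-5/2}\in L^1(\R^2)$). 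Optimising over the decomposition parameter then interpolates to the $\mathcal H^\delta$ gain. The point is that the fifth convolution with $h\sigma$ is exactly what pushes the Fourier decay past the $|\xi|^{-2}$ borderline; isolating $H=(f\sigma)^{\ast4}$ as you do stops one half-power short.
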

This proof is similar to \cite[Lemma 3.2]{Christ-Shao:extremal-for-sphere-restriction-II-characterizations} and so will be omitted. It relies on the following proposition, which is in the same spirit as \cite[Lemma 2.6]{Christ-Shao:extremal-for-sphere-restriction-II-characterizations}.

\begin{proposition}\label{prop-1}
For any $\epsilon>0$, there exists $\delta>0$ such that 
$$f_1\sigma \ast f_2\sigma \ast f_3 \sigma \ast f_4 \sigma *h\sigma \in \mathcal{H}^\delta $$
whenever $f_i \in \mathcal{H}^\epsilon(S^1), 1\le i\le 4$, and $h \in H^0(S^1)$, and 
\begin{equation}\label{eq-33}
\| f_1\sigma \ast f_2\sigma \ast f_3 \sigma \ast f_4 \sigma *h\sigma \|_{\mathcal{H}^\delta} \le C_\epsilon  \prod_{j=1}^4 \|f_i\|_{\mathcal{H}^\epsilon} \|h\|_{H^0}. 
\end{equation}
\end{proposition}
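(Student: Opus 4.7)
Set $F := f_1\sigma\ast f_2\sigma\ast f_3\sigma\ast f_4\sigma\ast h\sigma$. Since Lemma \ref{le-3} (applied with $h$ as the fifth factor and using $\|h\|_{\mathcal{H}^0}\sim\|h\|_{L^2}$) already supplies the $L^2(S^1)$-bound, it suffices to show
\[
\|F\circ\exp(tX_j) - F\|_{L^2(S^1)}\lesssim |t|^\delta\prod_{k=1}^4\|f_k\|_{\mathcal{H}^\epsilon}\|h\|_{L^2}
\]
uniformly for $0 < |t| \le 1$ and $j=1,2$, with some $\delta = \delta(\epsilon) > 0$. Following \cite[Lemma 2.6]{Christ-Shao:extremal-for-sphere-restriction-II-characterizations}, I would expand $F\circ\exp(tX_j) - F$ into five telescoping terms exactly as in \eqref{eq-14}. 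For the four terms in which the difference falls on some $f_i$, the Cauchy--Schwarz argument of Lemma \ref{le-3}, combined with $\|f_i\circ\exp(tX_j) - f_i\|_{L^2}\le |t|^\epsilon\|f_i\|_{\mathcal{H}^\epsilon}$, immediately produces the bound $|t|^\epsilon\prod_{k=1}^4 \|f_k\|_{\mathcal{H}^\epsilon}\|h\|_{L^2}$, satisfactory for any $\delta \le \epsilon$.

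The essential difficulty is the fifth term $T_5 := (h\circ\exp(tX_j) - h)\sigma\ast G_t$ with $G_t := \prod_{i=1}^4(f_i\circ\exp(tX_j))\sigma$: here $h$ carries the finite difference but has no regularity, and the Cauchy--Schwarz bound of Lemma \ref{le-3} yields no $|t|$-gain. The key maneuver is to transfer the finite difference from $h$ onto the smoother product $G_t$ by a change of variables $x\mapsto\exp(-tX_j)x$ in the $(h\circ\exp(tX_j))\sigma$-integral, using rotation-invariance of $\sigma$, obtaining
\[
T_5(y) = \int_{S^1}\bigl[G_t(y-\exp(-tX_j)x) - G_t(y-x)\bigr]\, h(x)\, d\sigma(x).
\]
A Cauchy--Schwarz in $x$ then yields
\[
\|T_5\|_{L^2(S^1)}^2 \le \|h\|_{L^2}^2 \iint_{S^1\times S^1} \bigl|G_t(y-\exp(-tX_j)x) - G_t(y-x)\bigr|^2 d\sigma(x)\, d\sigma(y),
\]
reducing the problem to an $L^2$-finite-difference estimate for the four-fold convolution $G_t$ at the $\R^2$-scale $|t|$ over the chord set $S^1 - S^1\subset\{|u|\le 2\}$.

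The main obstacle, and the principal technical task, is to bound this double integral by $C|t|^{2\delta}\prod\|f_k\|_{\mathcal{H}^\epsilon}^2$. The integrand involves the finite difference of $G_t$ under an $\R^2$-translation by $(I-\exp(-tX_j))x$, which is generically not itself a sphere rotation, so the $\mathcal{H}^\epsilon$-norms on the $f_k$ do not apply directly. To handle this, I would apply the telescoping identity a second time to the four factors of $G_t$, together with a four-fold analog of Lemma \ref{le-3} (available because $\sigma^{\ast 4}\in L^\infty(\{|u|\le 4\})$ by Lemma \ref{le-2}), to obtain partial finite-difference control from the $\mathcal{H}^\epsilon$-regularity of each $f_k$ along the accessible sphere rotations. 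The remaining translation-to-rotation gap would be filled by interpolating this $\mathcal{H}^\epsilon$-type difference bound against the trivial $L^\infty$-bound on $G_t$ (afforded by Lemma \ref{le-2} together with the Sobolev-type embedding of $\mathcal{H}^\epsilon$ into $L^p$ for $p$ large), yielding the required positive $\delta = \delta(\epsilon) \in (0,\epsilon)$ and completing the proof of \eqref{eq-33}.
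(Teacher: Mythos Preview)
Your approach diverges from the paper's. The paper does \emph{not} telescope $F\circ\exp(tX_j)-F$ into five terms; instead it uses a \emph{Lipschitz-plus-small decomposition} of the $f_i$. For each $\eta>0$ one writes $f_i=f_i^\sharp+f_i^b$ with $f_i^\sharp\in\operatorname{Lip}(S^1)$, $\|f_i^\sharp\|_{\operatorname{Lip}}\le\eta^{-C(\epsilon)}$, and $\|f_i^b\|_{L^2}\le\eta$ (available from $\mathcal{H}^\epsilon\subset H^\tau$ and standard Sobolev properties). A direct computation (the paper's Step~1) shows that when all four $f_i$ are Lipschitz, $F$ itself is Lipschitz on $S^1$ with constant $C\prod_i\|f_i\|_{\operatorname{Lip}}\|h\|_{L^2}$, hence lies in $\mathcal{H}^1$; the remaining pieces, each containing at least one $f_i^b$, are $O(\eta)$ in $L^2$ by Lemma~\ref{le-3}. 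Thus $\|F\circ\exp(tX_j)-F\|_{L^2}\lesssim |t|\,\eta^{-4C(\epsilon)}+\eta$, and optimizing in $\eta$ gives the bound $|t|^\delta$ with $\delta=(1+4C(\epsilon))^{-1}$. No telescoping is performed, and the rough factor $h$ is never asked to carry a finite difference.

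Your treatment of the four ``good'' telescoped terms is correct, but the handling of $T_5$ has a genuine gap. After the change of variables you need H\"older control of $G_t=f_1\sigma\ast\cdots\ast f_4\sigma$ under $\mathbb{R}^2$-translations of size $|t|$, whereas the $\mathcal{H}^\epsilon$ hypothesis on the $f_i$ controls only rotational differences on $S^1$. ``Telescoping again'' does not help here: each factor $f_i\sigma$ is a measure supported on $S^1$, and an ambient $\mathbb{R}^2$-translation moves that support off the circle, so the telescoped pieces are no longer of the form $g\sigma$ and the machinery of Lemma~\ref{le-3} does not apply. The proposed ``interpolation against the $L^\infty$ bound on $G_t$'' is also problematic: an $L^\infty$ bound yields zero $|t|$-gain, and interpolating a rotational $|t|^\epsilon$-gain (valid only along rotations) with a translational zero-gain bound does not manufacture a positive gain for translations. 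The clean way to close this step is precisely the paper's device: replace each $f_i$ by a Lipschitz approximation $f_i^\sharp$, for which $G_t^\sharp$ \emph{is} Lipschitz on $\mathbb{R}^2$, absorb the $f_i^b$ errors in $L^2$, and optimize over $\eta$. But once you invoke that decomposition, the initial five-term telescoping becomes superfluous and you have reproduced the paper's argument.
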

\begin{proof} Without loss of generality, we suppose that $$ \|f_i\|_{\mathcal{H}^\epsilon} =1, \text{ for }1\le i\le 4, \text{ and } \|h\|_{H^0}=1. $$
We divide the proof in the following 3 steps. 

{\bf Step 1.} Suppose that $f_i\in \operatorname{Lip}(S^1)$ for $1\le i\le 4$. By Theorem \ref{thm-completeZero}, there exists $\delta>0$, 
\begin{equation}\label{eq-33a}
\| f_1\sigma \ast f_2\sigma \ast f_3 \sigma \ast f_4 \sigma *h\sigma \|_{\mathcal{H}^\delta} \le C_\epsilon  \prod_{j=1}^4 \|f_i\|_{\operatorname{Lip}(S^1)} \|h\|_{H^0}. 
\end{equation}

{\bf Step 2. } For any $f\in \mathcal{H}^\epsilon(S^1)$ and $\eta >0$, there exists a decomposition that $f = f^\sharp+ f^b$, where $f^\sharp\in \operatorname{Lip}(S^1)$ and
\begin{align*}
\|f^b\|_{H^0} & \le \eta \|f\|_{\mathcal{H}^\epsilon}, \\
\|f^\sharp \|_{\operatorname{Lip}(S^1)} & \le \eta^{-C(\epsilon)} \|f\|_{\mathcal{H}^\epsilon}, \\
\|f^\sharp\|_{H^0} &\le C \|f\|_{\mathcal{H}^\epsilon},
\end{align*} 
where $C, C(\epsilon)$ independent of $f$. The existence of such decomposition follows from the inclusion that $\mathcal{H}^\epsilon \subset H^\tau$ for some $\tau =\tau(\epsilon)>0$, together with standard properties of $H^\tau$.  We perform such decompositions to each $f_i$, $1\le i\le 4$. {\bf Step 1} implies that 
\begin{equation}\label{eq-36} 
f_1^\sharp\sigma \ast f_2^\sharp\sigma \ast f_3^\sharp\sigma \ast f_4^\sharp\sigma \ast h\sigma  \in \mathcal{H}^\delta(S^1), 
\end{equation}
with the operator norm $O(\eta^{-4C(\epsilon)})$. On the other hand,  
\begin{equation}\label{eq-37}
\|f_1^b\sigma \ast f_2^b\sigma \ast f_3^b\sigma \ast f_4^b \sigma \ast h\sigma  \|_{L^2(S^1)}\le C \prod_{j=1}^4 \|f_i^b\|_{L^2(S^1)} \|h\|_{L^2} \le C \eta^4. 
\end{equation}
Similarly the contributions of the pairs $(f_i^\sharp, f_j^b)$, $1\le i,j\le 4$, belong to $L^2(S^1)$ with norms $O(\eta)$, since  $f_i^\sharp \in H^0$ is of $O(1)$. 

So far we have shown that for any $\eta>0$, $F= f_1\sigma \ast f_2 \sigma \ast f_3 \sigma \ast f_4 \sigma \ast h\sigma $ can be decomposed as the sum of two functions
\begin{equation}\label{eq-38}
F= F_\eta+ F^\eta,
\end{equation}
where $F_\eta\in L^2$ and $\|F_\eta\|_{L^2} \le \eta$, and $F^\eta \in \mathcal{H}^\delta$, and $\|F^\eta\|_{\mathcal{H}^\delta} \le C \eta^{-C(\epsilon)}$. Then we claim that $F\in \mathcal{H}^\delta$ for some $\delta $ depending on $\epsilon$. 

{\bf Step 3. } Let $0<|t|\le 1$ and $\eta>0$ be a parameter to be determined. For $F=F_\eta+F^\eta$, then 
\begin{equation}\label{eq-39}
\|\operatorname{exp}(tX_j) F^\eta-F^\eta\|_{L^2(S^1)} \le C |t|^\delta \|F^\eta\|_{\mathcal{H}^\delta }\le C |t|^\delta \eta^{-C(\epsilon)};
\end{equation}
and 
\begin{equation}\label{eq-40}
\|\operatorname{exp}(tX_j) F_\eta-F_\eta\|_{L^2(S^1)} \le  2 \|F_\eta\|_{L^2(S^1)}\le 2 \eta. 
\end{equation}
Then by the triangle inequality
\begin{equation}\label{eq-41}
\|\operatorname{exp}(tX_j) F-F\|_{L^2(S^1)} \le C |t|^\delta\eta^{-C(\epsilon)} + 2\eta. 
\end{equation}
Define $\eta$ by $C |t|^\delta\eta^{-C(\epsilon)} = 2\eta $. Then $$\eta= \left( \frac {C|t|^\delta}{2}\right)^{\frac 1{1+C(\epsilon)}}. $$
Therefore 
\begin{equation}\label{eq-42}
\|\operatorname{exp}(tX_j) F-F\|_{L^2(S^1)} \le 4 \left(\frac C2 \right)^{\frac 1{1+C(\epsilon)}} |t|^{\frac \delta{1+C(\epsilon)}} =C_\epsilon |t|^{\frac \delta{1+C(\epsilon)}} .
\end{equation}
We re-define $\delta$ to be $\frac {\delta}{1+C(\epsilon)}$. This finishes the proof of Proposition \ref{prop-1}.
\end{proof}

Therefore from Lemma \ref{le-4} and \ref{le-5}, the proof of Theorem \ref{thm-1} is complete. 

\section{The auxiliary lemmas}\label{sec:crappie}
In this section, we establish the H\"older estimate for the four-fold convolution of the circle measure as in \cite{Silva-Quilodran:2021:S1smoothness}. Given $\gamma\in (0,1)$ and let $H=|\cdot|^\gamma f_1\sigma*f_2\sigma *f_3\sigma *f_4\sigma:\mathbb{R}^2\to \mathbb{C} $ with $f_i\in \operatorname{Lip}(S^1), 1\le i\le 4$ supported on the ball $\{|x|\le 4\}$. The function $H$ satisfying that, for some $\tau\in (0,1)$ and $C<\infty$, 
\begin{equation}\label{eq-Holder} |H(x)-H(x')| \le C|x-x'|^\tau, \forall x, x'\in \{|x|\le 4\}\setminus \{0\}. 
\end{equation}
Then $H\in L^\infty (\mathbb{R}^2)$ and is continuous in $\{|x|\le 4\}\setminus \{0\}$. Given $\gamma \in (0,1)$, let $\mathcal{K}_\gamma = |\cdot |^{-\gamma } H$ and define the corresponding linear operator $\mathcal{K}_\gamma : C^0(S^{d-1}) \to L^2(S^{d-1})$ via 
\begin{equation}\label{eq-Caldeon-Zygmund} 
\mathcal{K}_\gamma f(\omega) = \int_{S^1} f(\nu) K_\gamma (\omega-\nu) d\sigma (\nu). 
\end{equation}
\begin{mainthm}\cite[Lemma 5.1]{Silva-Quilodran:2021:S1smoothness}\label{thm-completeZero}
Let $\gamma \in (0,1)$. Let $\mathcal{K}_\gamma$ be the linear operator defined in \eqref{eq-Caldeon-Zygmund}. Then there exists $\delta=\delta (\gamma)>0$ such that a bounded operator from $L^2(S^1)$ to $H^\delta (S^1)$.  
\end{mainthm}
The proof of Theorem \ref{thm-completeZero} relies on the following three theorems:. The first step is Theorem \ref{Caldeon-Zygmund2}, where we establish a convolution estimate for the circle measure. The second step is Theorem \ref{Calderon-Zygmund3-porkbelly}, where we establish an estimate on the 4-fold convolution estimate of the circle measures. The third step is Theorem \ref{Calderon-Zygmund4}, where we establish a H\"older estimate for the weighted 4-fold convolution estimate of the circle measures. Start by recalling that the two-fold convolution $\sigma*\sigma $ defines a measure supported on the ball $B_2:=\{|x|\le 2\} \subset \mathbb{R}^2$, which is absolutely continuous with respect to the Lebesgue measure on $B_2$, and whose Radon-Nikodym derivative equals, by a simple geometric computation, 
\begin{equation}\label{eq-metric}
\sigma*\sigma (x) = \frac {4}{|x|\sqrt{4-|x|^2}}. 
\end{equation} 
Let $h_1, h_2\in \operatorname{Lip} (S^1)$. The Radon-Nikodym derivative function $u_{12}$ defined by the relation $h_1\sigma *h_2\sigma (x) =u_{12} (x) \sigma*\sigma(x) $ for $0<|x| \le 2$ and $u_{12}(x) =0$ for $|x|>2$ can be expressed as 
$$ u_{12}(x) = \frac{1}{|\Gamma_x|}\int_{\Gamma_x} h_1(\nu) h_2(x-\nu) d\sigma_x(\nu),$$
where $\Gamma_x=S^1\cap (x+S^1)$, which consists of two points. Let $x^{\perp}$ be the $90^0$-counterclockwise rotation of $x$, so that $x^\perp \cdot x =0$ and $|x^\perp|= |x|$. Given $x\in B_2\setminus \{0\}\subset \mathbb{R}^2$, there exists unique-up-to-permutation $x_1,x_2\in S^1$, such that $x=x_1+x_2$. The vectors $x_1,x_2$ are explicitly given by 
$$x_1= \frac {x}{2}+(1-\frac {|x|^2}{4})^{1/2}\frac {x^\perp}{|x|}, x_2 = \frac x2-(1-\frac {|x|^2}{4})^{1/2} \frac {x^\perp }{|x|}. $$
Given $h_1, h_2 \in \operatorname{Lip}(S^1)$, the convolution $h_1\sigma *h_2\sigma $ can be written in the following way: If $0<|x|\le 2$, then 
$$ h_1\sigma *h_2\sigma (x) = 2 \dfrac {h_1(x_1)h_2(x_2) +h_1(x_2)h_2(x_1))}{|x|\sqrt{4-|x|^2}}$$
and for $|x|>2$ one obviously has that $h_1\sigma *h_2\sigma (x) =0$. In this case, 
$$u_{12}(x) = \frac 12 \left( h_1(x_1)h_2(x_2) +h_2(x_1)h_2(x_1)\right), \text{ if } 0<|x|\le 2. $$
\begin{mainthm}\cite[Lemma 4.1]{Silva-Quilodran:2021:S1smoothness}\label{Caldeon-Zygmund2}
Let $x,x'\in B_2\setminus \{0\}\subset \mathbb{R}^2$. Then by 
$$|u_{12}(x)-u_{12}(x')| \le C \|h_1\|_{\operatorname{Lip} (S^1)}  \|h_2\|_{\operatorname{Lip} (S^1)} \left( |x-x'|^{1/2} +\left|\frac {x}{|x|}-\frac {x'}{|x'|} \right|\right)$$
for some universal constant $C<\infty.$
\end{mainthm}
Our goal is to establish a H\"older-type estimate for the fourfold convolution $h_1\sigma*h_2\sigma *h_3\sigma*h_4\sigma$, where $\{h_j\}_{j=1}^4$ are Lipschitz functions on the unit circle $S^1$. Let 
\begin{align*}
u_{12}(x) := \frac {1}{2} \left( h_1(x_1)h_2(x_2) + h_1(x_2)h_2(x_1)\right)1_{B_2} (x), \\
u_{34}(x) := \frac {1}{2} \left( h_3(x_1)h_4(x_2) + h_3(x_2)h_4(x_1)\right)1_{B_2} (x). 
\end{align*}
Both of which satisfy the conclusion of Theorem \ref{Caldeon-Zygmund2}. We write 
$$ F(x) :=\sigma * \sigma (x) = \dfrac {4\times 1_{B_2(x)}}{ |x| \sqrt{4-|x|^2}}.  $$
We consider the following upper bound, 
$$\dfrac {1}{ |x| \sqrt{4-|x|^2}} =\frac {\sqrt{4-|x|^2}}{4|x|}+ \frac {\sqrt{|x|}} {4\sqrt{4-|x|^2}} \le \frac {1}{|x|}+\frac {1}{\sqrt{2-|x|}}, \forall  |x|\le 2,$$
and the estimate 
$$ \sigma^{*4} (x) \lesssim (1+\log |x|) 1_{B_4(x)}, \forall x\in \mathbb{R}^2. $$
See for instance \cite[Eq. (3.21)]{Silva-Quilodran:2021:Lower-S-maximizers}. This inequality in particular implies 
$$ |\cdot |^\beta \sigma ^{*4} \in L^\infty (\mathbb{R}^2), \forall \beta>0. $$
Setting $H_\gamma (x) = |x|^\gamma (u_{12} F) *(u_{34}F)(x)$, we then have $H_\gamma \in L^\infty(\mathbb{R}^2),$ for any $\gamma\in (0,1)$ and $\{h_j\}_{j=1}^4 \subset L^\infty (S^1).$ The following preparatory result quantifies the smallness of the function $\left(1_E(\sigma*\sigma)\right)*(\sigma*\sigma) $ for certain sets $E\subset \mathbb{R}^2$ of small Lebesgue measure. This estimate is bilinear in nature, which resembles Bourgain's estimate in \cite[Lemma 111]{Bourgain:1998:refined-Strichartz-NLS}.

\begin{mainthm}\cite[Lemma 4.5]{Silva-Quilodran:2021:S1smoothness}\label{Calderon-Zygmund3-porkbelly}
Set $F=\sigma*\sigma $. Let $x\in B_4\subset \mathbb{R}^2$. Then for every $\gamma \in (0,1)$ and $s\in (0, \frac{\gamma}{2(\gamma+1)} ]$, there exists a constant $C_{\gamma, s} <\infty$ such that for all $\epsilon\in (0,1)$, 
$$ |x|^\gamma \int_{A(x,\epsilon)} F(y) F(x-y) dy \le C_{\gamma, s} \epsilon^{\min\{\frac 16, \frac {\gamma}{2(\gamma+1)}-s\}}, $$
$$ |x|^\gamma \int_{B_2\cap B(x,\epsilon)} F(y) F(x-y) dy \le C_{\gamma, s} \epsilon^{ \min\{\frac 12, \gamma-s\} }, $$
where $A(x,\epsilon) :=\{y\in B_2:\, 2-\epsilon \le |x-y|\le 2\}$ and $B(x,\epsilon) :=\{y\in B_2:\, |x-y|\le 2\}$.
\end{mainthm}

This in turns establishes the following theorem. 
\begin{mainthm}\cite[Proposition 4.6]{Silva-Quilodran:2021:S1smoothness}\label{Calderon-Zygmund4}
Given $\gamma\in (0,1)$ and $\{h_j\}_{j=1}^4 \subset \operatorname{Lip} (S^1)$, let $H_\gamma = |\cdot|^\gamma (h_1\sigma *h_2\sigma*h_3\sigma*h_4\sigma)$. Then there exists $\tau>0$ and $C<\infty$ such that for every $x,x'\in \mathbb{R}^2$, 
$$|H_\gamma (x) - H_\gamma (x')| \le C |x-x'|^\tau, $$
where $C \le C_0 \Pi_{j=1}^4 \|h_j\|_{\operatorname{Lip} (S^1)}$ for some constant $C_0<\infty$ depending only on $\gamma$. 
\end{mainthm}


\begin{thebibliography}{1}

\bibitem{Bahouri-Gerard:1999:profile-wave}
H.~Bahouri and P.~G{\'e}rard.
\newblock High frequency approximation of solutions to critical nonlinear wave
  equations.
\newblock {\em Amer. J. Math.}, 121(1):131--175, 1999.

\bibitem{Begout-Vargas:2007:profile-schrod-higher-d}
P.~B{\'e}gout and A.~Vargas.
\newblock Mass concentration phenomena for the {$L\sp 2$}-critical nonlinear
  {S}chr\"odinger equation.
\newblock {\em Trans. Amer. Math. Soc.}, 359(11): 5257--5282, 2007.

\bibitem{Bennett-Bez-Carbery-Hundertmark:2008:heat-flow-of-strichartz-norm}
J.~Bennett, N.~Bez, A.~Carbery, and D.~Hundertmark.
\newblock Heat-flow monotonicity of {S}trichartz norms.
\newblock {\em Analysis and PDE}, Vol. 2, No. 2: 147--158, 2009.

\bibitem{Bourgain:1991:Besicovitch-maximal}
J.~Bourgain.
\newblock Besicovitch type maximal operators and applications to {F}ourier
  analysis.
\newblock {\em Geom. Funct. Anal.}, 1(2): 147--187, 1991.

\bibitem{Bourgain:1999:radial-NLS}
J.~Bourgain.
\newblock Global wellposedness of defocusing critical nonlinear {S}chr\"odinger
  equation in the radial case.
\newblock {\em J. Amer. Math. Soc.}, 12(1): 145--171, 1999.

\bibitem{Bourgain:1998:refined-Strichartz-NLS}
J.~Bourgain.
\newblock Refinements of {S}trichartz' inequality and applications to
  {$2$}{D}-{NLS} with critical nonlinearity.
\newblock {\em Internat. Math. Res. Notices}, (5):253--283, 1998.


\bibitem{Bourgain-Guth:2011:bounds-oscillatory-integrals}
J. ~Bourgain, L.~Guth.
\newblock Bounds on oscillatory integral operators based on multilinear
              estimates.
\newblock {\em Geom. Funct. Anal.}, 21(6): 1239--1295, 2011.

\bibitem{Bulut:2009:maximizer-wave}
A.~Bulut.
\newblock Maximizers for the {S}trichartz inequalities for the {W}ave Equation.
\newblock {\em Differential and Integral Equations}, 23: 1035-1072, 2010.


\bibitem{Carneiro-2009-sharp-strichartz}
E.~Carneiro.
\newblock A sharp inequality for the {S}trichartz norm.
\newblock {\em Int. Math. Res. Not. IMRN}, (16): 3127--3145, 2009.

\bibitem{Carneiro-Foschi-Silva-Thiele:2015:sharp-trilinear-inequality-S1}
E.~Carneiro, D. ~Foschi, D.~Silva and C. ~Thiele.
\newblock A sharp trilinear inequality related to Fourier restriction on the circle.
\newblock {\em Revista Matematica Iberoamericana. } Volume 33 (4), 1463–1486. ArXiv:1509.06674.


\bibitem {Charalambides:2012:Cauchy-Pexider} 
M.~Charalambides.
\newblock On Restricting Cauchy-Pexider Equations to Submanifolds. 
\newblock {Aequationes Math.} 86: 231-253,2013.


\bibitem{Christ-Shao:extremal-for-sphere-restriction-I-existence}
M.~Christ and S.~Shao.
\newblock Existence of extremals for a {F}ourier restriction inequality.
\newblock {\em Analysis and PDE.} 5(2): 261--312, 2012.

\bibitem{Christ-Shao:extremal-for-sphere-restriction-II-characterizations}
M.~Christ and S.~Shao.
\newblock On the extremisers of an adjoint {F}ourier restriction inequality.
\newblock {\em Advance in Mathematics.} 230 (3): 957--977, 2012.

\bibitem{I-teem:2008:GWP-for-energy-critical-NLS-in-R3}
J.~Colliander, M.~Keel, G.~Staffilani, H.~Takaoka, and T.~Tao.
\newblock Global well-posedness and scattering for the energy-critical
  nonlinear {S}chr\"odinger equation in {$\Bbb R\sp 3$}.
\newblock {\em Ann. of Math. (2)}, 167(3):767--865, 2008.

\bibitem{Foschi:2007:maxi-strichartz-2d}
D.~Foschi.
\newblock Maximizers for the {S}trichartz inequality.
\newblock {\em J. Eur. Math. Soc. (JEMS)}, 9(4): 739--774, 2007.


\bibitem{Foschi:2013:global-extremisers-2d}
D.~Foschi.
\newblock {G}lobal maximizers for the sphere adjoint {F}ourier restriction inequality.
\newblock {\em   J. Funct. Anal.} 268(3): 690--702, 2015.

\bibitem{Foschi-Klainerman:2000:bilinear-space-time-estimates-for-wave}
D.~Foschi and S.~Klainerman.
\newblock Bilinear space-time estimates for homogeneous wave equations.
\newblock{\em Ann. Sci. \'Ecole Norm. Sup. (4)}, 33 (2) (2000), 211--274. 

\bibitem{Frank-Lieb-:Sabin:2016:extremisers-all-dimensions}
 R.~L.~Frank, E.~H.~Lieb, J.~Sabin.
\newblock {M}aximizers for the {S}tein-{T}omas Inequality.
\newblock {\em   GAFA. } 26(4): 1095--1134, 2016.

\bibitem{Guth:2016:restriction-polynomial-partition}
L.~Guth.
\newblock A restriction estimate using polynomial partitioning.
\newblock {\em Journal of the American Mathematical Society}, 29(2): 371--413,2016.


\bibitem{Guth:2016:restriction-polynomial-partition-II}
L.~Guth.
\newblock Restriction estimates using polynomial partitioning II.
\newblock {\em arXiv:1603.04250}.

\bibitem{Hundertmark-Shao:2012:Analyticity-of-extremals-Airy-Strichartz}
D.~Hundertmark and S.~Shao.
\newblock Analyticity of extremals to the {A}iry-{S}trichartz inequality.
\newblock {\em Bull. London Math. Soc.} 44(2): 336-352, 2012.

\bibitem{Hundertmark-Zharnitsky:2006:maximizers-Strichartz-low-dimensions}
D.~Hundertmark and V.~Zharnitsky.
\newblock On sharp {S}trichartz inequalities in low dimensions.
\newblock {\em Int. Math. Res. Not.}, pages Art. ID 34080, 18, 2006. 


\bibitem{Kenig-Merle:2006:focusing-energy-NLS-radial}
C.~Kenig and F.~Merle.
\newblock Global well-posedness, scattering and blow-up for the
  energy-critical, focusing, non-linear {S}chr\"odinger equation in the radial
  case.
\newblock {\em Invent. Math.}, 166(3): 645--675, 2006.

\bibitem{Kenig-Merle:2008:focusing-energy-nonlinear-Wave}
C.~Kenig and F.~Merle.
\newblock Global well-posedness, scattering and blow-up for the energy-critical
  focusing non-linear wave equation.
\newblock {\em Acta Math.}, 201(2): 147--212, 2008.

\bibitem{Kunze:2003:maxi-strichartz-1d}
M.~Kunze.
\newblock On the existence of a maximizer for the {S}trichartz inequality.
\newblock {\em Comm. Math. Phys.}, 243(1): 137--162, 2003.

\bibitem{Petersen:2016:RiemannianGemeotry}
 P.~Petersen.
\newblock Riemannian geometry.
\newblock {Graduate Texts in Mathematics 171, Second Edition}, Springer.


\bibitem{Shao:2008:linear-profile-Airy-Maximizer-Airy-Strichartz}
S.~Shao.
\newblock The linear profile decomposition for the {A}iry equation and the
  existence of maximizers for the {A}iry {S}trichartz inequality.
\newblock {\em Anal. PDE}, 2(1): 83--117, 2009.

\bibitem{Shao:2009:maximizers-schrodinger}
S.~Shao.
\newblock Maximizers for the {S}trichartz and the {S}obolev-{S}trichartz
              inequalities for the {S}chr\"odinger equation.
\newblock {\em Electron. J. Differential Equations}, 3: 1-13, 2009.


\bibitem{Shao:2015:extremals-1d-sphere}
 S.~Shao.
\newblock {O}n existence of extremizers for the Tomas-Stein inequality for $S^1$.
\newblock {\em Journal of Functional Analysis}, 270: 3996-4038, 2016.

\bibitem{Silva-Quilodran:2021:Lower-S-maximizers}
 Diogo Oliveira e. Silva and Rene Quilodr\'an.
\newblock Global maximizers for adjoint Fourier restriction inequalities on low dimensional spheres.
\newblock {Journal of Functional Analysis.} (2021), Vol. 280(7). 108825.  


\bibitem{Silva-Quilodran:2021:S1smoothness}
 Diogo Oliveira e. Silva and Rene Quilodr\'an.
\newblock Smoothness of solutions of a convolution equation of restricted type on the sphere.
\newblock {Forum of Mathematics, Sigma.} (2021), Vol. 9. e12 1--40.  

\bibitem{Silva-Thiele:2015:integrals-6-Bessel-functions}
 D.~Silva and C.~Thiele.
\newblock Estimates for certain integrals of products of six Bessel functions.
\newblock {arXiv:1509.06309.}

\bibitem{Stein:1970:singular integrals}
 E.~Stein.
\newblock Singular integrals and differentiability properties of
              functions.
\newblock {Princeton Mathematical Series, No. 30}, Princeton, NJ, 1970.



\bibitem{Stein:1993}
E.~M. Stein.
\newblock {\em Harmonic analysis: real-variable methods, orthogonality, and
  oscillatory integrals}, volume~43 of {\em Princeton Mathematical Series}.
\newblock Princeton University Press, Princeton, NJ, 1993.
\newblock With the assistance of Timothy S. Murphy, Monographs in Harmonic
  Analysis, III. 
  
  \bibitem{Stein-Weiss:1971:Fouerier Analysis}
 E.~Stein and G. ~Weiss.
\newblock Introduction to Fouerier Analysis on Euclidean Spaces.
\newblock {Princeton Mathematical Series, No. 32}, Princeton, NJ, 1971.
  
 \bibitem{Tao:2003:paraboloid-restri}
T.~Tao.
\newblock A sharp bilinear restrictions estimate for paraboloids.
\newblock {\em Geom. Funct. Anal.}, 13(6): 1359--1384, 2003.



 \bibitem{Tao:2006-CBMS-book}
T.~Tao.
\newblock Nonlinear dispersive equations, local and global analysis.
\newblock {\em CBMS Regional Conference Series in Mathematics}, 106, 2006. 

  
\bibitem{Wolff:2001:restric-cone}
T.~Wolff.
\newblock A sharp bilinear cone restriction estimate.
\newblock {\em Ann. of Math. (2)}, 153(3): 661--698, 2001.

\end{thebibliography}
\end{document}